\newcommand{\solutionname}{\textbf{Example}}
\theoremstyle{plain}
\newtheorem{Thm}{Theorem}
\newtheorem{Cor}[Thm]{Corollary}
\newtheorem{Lem}[Thm]{Lemma}
\newtheorem{Prop}[Thm]{Proposition}
\theoremstyle{definition}
\newtheorem{Def}{Definition}
\newtheorem{Assumption}[Def]{Assumption}
\newtheorem{Ex}[Def]{Example}
\newenvironment{ExQED}
 {\pushQED{\qed}\Ex}
 {\popQED\endEx}
\theoremstyle{remark}
\newtheorem{Rem}{Remark}
\numberwithin{equation}{section}
\numberwithin{Thm}{section}
\numberwithin{Def}{section}
\numberwithin{Rem}{section}
\def\R{\mathbf{R}}
\begin{document}
\large

\title[Efficiency Formula under Constant Seebeck Coefficients]{Unique Temperature Distribution and Explicit Efficiency Formula for One-Dimensional Thermoelectric Generators under Constant Seebeck Coefficients}

\author{Jaywan Chung}
\address[Chung]{Enegy Conversion Research Center, Korea
 Electrotechnology Research Institute, 12, Jeongiui-gil,
 Changwon-si, Gyeongsangnam-do, 51543 Korea}
\email{jchung@keri.re.kr}

\author{Byungki Ryu}
\address[Ryu]{Enegy Conversion Research Center, Korea
 Electrotechnology Research Institute, 12, Jeongiui-gil,
 Changwon-si, Gyeongsangnam-do, 51543 Korea}
\email{byungkiryu@keri.re.kr}

\author{Hyowon Seo}
\address[Seo]{Department of Applied Mathematics and the Institute of Natural Sciences, Kyung Hee University, Yongin, 446-701, Republic of Korea}
\email{hyowseo@gmail.com}

%
%

%
\date{March 10, 2021}

\begin{abstract}
A thermoelectric generator converts a temperature difference into electrical energy. Its energy conversion efficiency is determined by the steady-state temperature distribution inside the generator. By assuming the thermoelectric material in the generator has a temperature-independent Seebeck coefficient and the generator is one-dimensional, we show that the second-order integro-differential equation describing the inside temperature distribution has a unique solution for any given ratio of external load resistance to the internal resistance. Hence the efficiency is well defined. Furthermore, we show the efficiency has an explicit formula in terms of the temperature-dependent thermal conductivity and electrical resistivity of the thermoelectric material. On the other hand, if we impose an external load resistance value, not the ratio, then the integro-differential equation can have multiple solutions.
\end{abstract}

 \maketitle
%


\section{Introduction}

Thermoelectric generators convert a temperature difference into electrical energy. They have been used to generate power for spacecraft by converting radioactive decay heat \cite{abelson2006space}. Recently they are attracting attention as a waste heat recovery device \cite{bell2008cooling}. For their utilization, the energy conversion efficiency is required to be enhanced. In this paper, we consider the well-definedness of the energy conversion efficiency and the quantitative relation between the efficiency and the material properties in a one-dimensional thermoelectric generator model. The governing equation is a second-order integro-differential equation with diffusion, reaction and convection terms. However, the mathematical analysis of the efficiency is seldomly found because the efficiency is not defined in the usual heat equations.

Here we introduce the one-dimensional ideal thermoelectric generator model. The generators utilize thermoelectric materials having three properties: thermal conductivity $\kappa$, electrical resistivity $\rho$ and Seebeck coefficient $\alpha$. These thermoelectric material properties are crucial in energy conversion and in general depend on temperature $T$: $\kappa=\kappa(T)$, $\rho=\rho(T)$ and $\alpha=\alpha(T)$. The temperature distribution inside the generators determines the energy conversion efficiency. Assume the generator is of length $L > 0$ and cross-sectional area $A_c > 0$. Then the temperature distribution $T=T(x)$ inside a one-dimensional ideal thermoelectric generator satisfies
\begin{equation}\label{only-te-eqn}
\frac{d}{dx} \Big( \kappa(T) \frac{dT}{dx} \Big) + \rho(T) J[T]^2 - T \frac{d\alpha}{dT}(T) \frac{dT}{dx} J[T] = 0 \quad \text{for $0 < x < L$},
\end{equation}
where the electrical current density $J=J[T]$ is a nonlocal quantity depending on the temperature $T$:
\begin{equation}\label{te-eqn-J-and-R}
J[T] := \frac{V}{R[T] A_c} \quad \text{where} \quad R[T] := \frac{1+\gamma}{A_c} \int_0^L \rho(T(x))\,dx.
\end{equation}
The constant $\gamma \geq 0$ is the ratio of external load resistance to the internal resistance $\frac{1}{A_c} \int_0^L \rho(T(x))\,dx$. The $R[T]$ represents the total electrical resistance, which is the sum of the internal and external resistances. The $V$ is the Seebeck voltage which will be determined by the temperature-dependent Seebeck coefficient and boundary conditions.
For a derivation of the thermoelectric equation \eqref{only-te-eqn}, refer to \cite[Section 1.8.5, Section 2.1.2 and Eq. (2.5)]{goupil2015continuum}.

We assume the thermoelectric generator is under fixed boundary temperatures:
\begin{equation}\label{eq-bc}
T(0) = T_h, \quad T(L) = T_c, \quad T_h \geq T_c > 0,
\end{equation}
where $T_h$ is the hot-side temperature and $T_c$ is the cold-side temperature. Here the strict lower bound 0 of $T_h$, $T_c$ is given to follow the law of thermodynamics that temperature should be bigger than 0 K.

Under the Dirichlet boundary condition \eqref{eq-bc}, the Seebeck voltage $V$ is determined by
\begin{equation}\label{seebeck-voltage}
V = \int_{T_c}^{T_h} \alpha(s)\,ds.
\end{equation}

If the solution $T$ of the thermoelectric equation \eqref{only-te-eqn} is found, the energy conversion efficiency $\eta$ of the ideal thermoelectric generator is
\begin{equation}\label{eq-efficiency}
\eta = \frac{\frac{\gamma}{1+\gamma} V}{\frac{-\kappa(T_h) T_x(0)}{J[T]} + \alpha(T_h) T_h} \quad \text{for $J[T] \not= 0$}
\end{equation}
(see Section \ref{sec-explicit-efficiency-formula} for a derivation). When $J[T]=0$, the efficiency is zero because no electrical energy is generated.
As the efficiency $\eta$ depends on the boundary slope $T_x(0)$, the efficiency \eqref{eq-efficiency} is well defined only when the solution $T$ of the thermoelectric equation \eqref{only-te-eqn} uniquely exists.

If one imposes some smallness assumptions to suppress the effects of the reaction and convection terms in \eqref{only-te-eqn}, one may obtain the unique existence of solutions of \eqref{only-te-eqn}. But without a smallness assumption, the question of the uniqueness is open.

In this paper, we restrict the attention to a special case having a temperature-independent Seebeck coefficient, where the convection term vanishes. Then we can prove the unique existence of the solutions without any smallness assumption. Furthermore, we can find an explicit formula for the efficiency. Our results generalize previous observations on the maximum efficiency for various special forms of $\kappa(T)$ and $\rho(T)$.

\section{Main Result}

We assume the Seebeck coefficient $\alpha(T)$ is temperature-independent, that is, $\alpha(T) \equiv \alpha_0$ for some constant $\alpha_0$. Then the thermoelectric generator model \eqref{only-te-eqn}--\eqref{seebeck-voltage} reduces to
\begin{equation}\label{te-eqn-constant-Seebeck}
\left\{
\begin{aligned}
&(\kappa(T)T_x)_x + \rho(T) J[T]^2 = 0 \quad \text{for $0<x<L$},\\
&J[T] := \frac{V}{R[T] A_c}, \quad R[T] := \frac{1+\gamma}{A_c} \int_0^L \rho(T(x))\,dx,\\
&V := \alpha_0 (T_h-T_c),\\
&T(0) = T_h, ~T(L) = T_c, \quad T_h \geq T_c > 0.
\end{aligned}
\right.
\end{equation}
The primary aim of this paper is to prove the nonlocal reaction-diffusion equation \eqref{te-eqn-constant-Seebeck} has a unique solution under the following positivity and regularity conditions on $\kappa$ and $\rho$.

\begin{Assumption}[Assumption on $\kappa$]
Assume $\kappa \in C^1([T_c, \infty))$ and $\kappa(T) > 0$ for all $T \in [T_c, \infty)$.
\end{Assumption}

\begin{Assumption}[Assumption on $\rho$]
Assume $\rho$ is locally Lipschitz continuous on $[T_c, \infty)$, $\rho(T) > 0$ for all $T \in [T_c, \infty)$, and
\begin{equation}\label{condition-rho-times-kappa}
\int_{T_c}^\infty \rho(T) \kappa(T)\,dT = \infty.
\end{equation}
\end{Assumption}

\begin{Rem}
By the Wiedemann-Franz law (see, e.g., \cite[p.20]{ashcroft1976solid}), for many metals there is a constant $\mathrm{Lo} > 0$ such that
\[ \rho(T) \kappa(T) = \mathrm{Lo} \, T.\]
Hence the condition \eqref{condition-rho-times-kappa} is not restrictive.
\end{Rem}

Under the above assumptions on $\kappa$ and $\rho$, we can prove the unique existence of solutions. Note that no smallness on $\kappa$, $\rho$, and $\alpha_0$ is assumed.

\begin{Thm}[Unique Existence of Temperature Distribution]\label{thm-unique-existence-of-te-eqn}
Suppose the assumptions on $\kappa$ and $\rho$ hold.
Then for any $\alpha_0 \in \R$ and $\gamma \geq 0$, there is a unique classical solution $T \in C^2([0, L])$ of the thermoelectric equation \eqref{te-eqn-constant-Seebeck}.
\end{Thm}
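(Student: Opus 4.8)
The plan is to reduce the nonlocal problem \eqref{te-eqn-constant-Seebeck} to a one-parameter shooting problem by two successive changes of variable, and then to settle existence and uniqueness by an elementary algebraic computation. If $V=0$ (that is, $\alpha_0=0$ or $T_h=T_c$) then $J[T]=0$, the system reduces to $(\kappa(T)T_x)_x=0$ with Dirichlet data, and its unique classical solution is the profile for which $\int_{T_c}^{T}\kappa(s)\,ds$ is affine in $x$; so from now on assume $V\neq0$. Apply the Kirchhoff transformation $u=\Psi(T):=\int_{T_c}^{T}\kappa(s)\,ds$: since $\kappa\in C^1$ and $\kappa>0$, the map $\Psi$ is a bijection of $[T_c,\infty)$ onto $[0,u_\infty)$ with $C^2$ inverse, where $u_\infty:=\Psi(\infty)\in(0,\infty]$, so $T\in C^2([0,L])$ if and only if $u\in C^2([0,L])$. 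Writing $\tilde\rho:=\rho\circ\Psi^{-1}$ (positive and locally Lipschitz on $[0,u_\infty)$), $u_h:=\Psi(T_h)>0$, and $P(u):=\int_0^u\tilde\rho(w)\,dw$, the identity $u_x=\kappa(T)T_x$ turns \eqref{te-eqn-constant-Seebeck} into $u_{xx}+q\,\tilde\rho(u)=0$ on $(0,L)$, $u(0)=u_h$, $u(L)=0$, with the nonlocal constant $q=J[T]^2=\frac{V^2}{(1+\gamma)^2}\big(\int_0^L\tilde\rho(u(x))\,dx\big)^{-2}\ge0$. After this substitution the hypothesis \eqref{condition-rho-times-kappa} states exactly that $P$ maps $[0,u_\infty)$ bijectively onto $[0,\infty)$, and that is the only role it plays.

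Next I record the shape of any solution. Since $\int_0^L\tilde\rho(u)$ is finite and positive and $V\neq0$, we have $q>0$, hence $u_{xx}=-q\,\tilde\rho(u)<0$; thus $u$ is strictly concave, lies above the chord joining $(0,u_h)$ and $(L,0)$, and therefore satisfies $u\ge0$ on $[0,L]$ and $u_x(L)<0$. Put $\sigma:=-u_x(L)/\sqrt q>0$, $\ell:=\sqrt q\,L>0$, and $v(t):=u(L-t)$; then $\bar v_\sigma(\tau):=v(\tau/\sqrt q)$ is the unique solution (uniqueness from the local Lipschitz continuity of $\tilde\rho$) of the one-parameter initial value problem $\bar v''=-\tilde\rho(\bar v)$, $\bar v(0)=0$, $\bar v'(0)=\sigma$. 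Conversely, a pair $(\sigma,\ell)$ with $\sigma,\ell>0$ produces a candidate $u(x):=\bar v_\sigma(\sqrt q\,(L-x))$, $q:=\ell^2/L^2$, which solves the transformed problem precisely when $u(0)=u_h$ and the nonlocal constraint for $q$ both hold. Hence solutions of \eqref{te-eqn-constant-Seebeck} are in one-to-one correspondence with the admissible pairs $(\sigma,\ell)$.

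Now exploit the first integral $\tfrac12(\bar v_\sigma')^2+P(\bar v_\sigma)=\tfrac12\sigma^2$. It gives $P(\bar v_\sigma)\le\tfrac12\sigma^2$, so $0\le\bar v_\sigma\le\bar M:=P^{-1}(\sigma^2/2)<u_\infty$; this is where \eqref{condition-rho-times-kappa} is used, to ensure $\bar M$ is well defined and that $\bar v_\sigma$ stays within the domain of $\tilde\rho$, so that $\bar v_\sigma$ is defined on a finite interval on which it rises monotonically to $\bar M$ and then falls symmetrically back to $0$. From $\int_0^\ell\tilde\rho(\bar v_\sigma)\,d\tau=-\int_0^\ell\bar v_\sigma''\,d\tau=\sigma-\bar v_\sigma'(\ell)$, the nonlocal constraint for $q$ becomes $\sigma-\bar v_\sigma'(\ell)=c_0:=|V|/(1+\gamma)$, while $\bar v_\sigma(\ell)=u_h$ forces $\sigma\ge\sigma_0:=\sqrt{2P(u_h)}$ and, by the first integral again, $\bar v_\sigma'(\ell)=\pm\sqrt{\sigma^2-\sigma_0^2}$, the sign being $+$ when $\ell$ is the first time $\bar v_\sigma$ attains the level $u_h$ (equivalently, $u$ is monotone) and $-$ when $\ell$ is the second such time (equivalently, $u$ has an interior maximum exceeding $u_h$). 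Therefore a solution exists if and only if some $\sigma\ge\sigma_0$ satisfies $f_-(\sigma)=c_0$ (monotone case) or $f_+(\sigma)=c_0$ (interior-maximum case), where $f_\pm(\sigma):=\sigma\pm\sqrt{\sigma^2-\sigma_0^2}$.

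Finally, on $[\sigma_0,\infty)$ the function $f_+$ increases strictly from $\sigma_0$ to $\infty$ while $f_-$ decreases strictly from $\sigma_0$ to $0$. Hence for every $c_0>0$ exactly one $\sigma\ge\sigma_0$ solves one of the two equations ($f_+(\sigma)=c_0$ if $c_0\ge\sigma_0$, $f_-(\sigma)=c_0$ if $c_0\le\sigma_0$, the two coinciding with $\sigma=\sigma_0$ only when $c_0=\sigma_0$). This fixes $(\sigma,\ell,q)$, hence $u$, uniquely, and inverting $\Psi$ gives the unique classical solution $T=\Psi^{-1}(u)\in C^2([0,L])$; conversely, the $u$ assembled from this $(\sigma,\ell)$ satisfies the differential equation and the boundary values by construction and the nonlocal constraint by the same integral identity, so it is a genuine solution. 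I expect the delicate points to be (a) the a priori structure (strict concavity, $u_x(L)<0$) together with the check that the two rescalings place every solution, and only solutions, into the one-parameter family --- in particular the translation of the two boundary conditions into the $f_\pm$ equations --- and (b) the global existence of $\bar v_\sigma$, which rests entirely on \eqref{condition-rho-times-kappa}; with these in hand, the monotonicity of $f_\pm$ makes the remainder immediate.
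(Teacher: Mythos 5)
Your proposal is correct and follows essentially the same route as the paper: a Kirchhoff transformation to $u''+\hat{\rho}(u)=0$, a rescaling that turns the nonlocal current into the scalar constraint $\int\hat{\rho}(u)\,dy=|V|/(1+\gamma)$, and a shooting argument whose first integral reduces the problem to a monotone scalar equation (your global-existence/finite-hitting-time point is exactly the paper's Proposition \ref{prop-ivp-can-match-bc}). The only cosmetic difference is that you shoot from the cold end and split into the two branches $f_\pm(\sigma)=\sigma\pm\sqrt{\sigma^2-\sigma_0^2}$, whereas the paper shoots from the hot side and packages both branches into the single strictly increasing function $I(\theta)=\theta+\sqrt{\theta^2+2\int_{u_c}^{u_h}\hat{\rho}(u)\,du}$; under $\theta=-\bar v'(\ell)$ the resulting scalar equations are identical.
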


\begin{Rem}
One may formally consider the constant-Seebeck thermoelectric equation \eqref{te-eqn-constant-Seebeck} as a one-dimensional thermistor problem. The thermistor problem describing the Joule heat conduction inside a conductor consists of the charge conservation equation $-\nabla \cdot (\frac{1}{\rho(T)} \nabla \phi) = 0$ and the temperature equation $-\nabla \cdot (\kappa(T) \nabla T) = \frac{1}{\rho(T)} |\nabla \phi|^2$.
In one spatial dimension, assuming $\phi(0) - \phi(L) = \frac{V}{1+\gamma}$, we can reduce the thermistor problem into our thermoelectric equation \eqref{te-eqn-constant-Seebeck}. Let $\tilde{J} := -\frac{1}{\rho(T(x))} \frac{d}{dx}\phi$ for the solution $(\phi, T)$ of the thermistor problem. By the charge conservation equation, $\tilde{J}$ is a constant over $x \in [0, L]$. Integrating both sides of $-\frac{d}{dx}\phi = \tilde{J} \rho(T(x))$ over $x \in [0, L]$, we have $\frac{V}{1+\gamma} = \tilde{J} \int_0^L \rho(T(x))\,dx$. Replacing the $\frac{d}{dx}\phi$ by $-\tilde{J} \rho(T)$ and equating $\tilde{J} = J[T]$, we can check that the solution $T$ of the thermistor problem is a temperature solution of \eqref{te-eqn-constant-Seebeck}.

The unique existence of classical solutions of the thermistor problem has been proved by Cimatti \cite{cimatti1989remark} by transforming the charge conservation equation into the Laplace equation. The Cimatti's result implies our Theorem \ref{thm-unique-existence-of-te-eqn} under a higher regularity of $\kappa, \rho \in C^2([T_c, \infty))$. This higher regularity assumption can be relaxed but at least $C^1$-regularity of $\rho$ is required because a classical solution of the charge conservation equation requires the $C^1$-regularity. On the other hand, our theorem only assumes the local Lipschitz continuity of $\rho$ as we consider the temperature equation directly.
\end{Rem}

Our proof relies on a transformation of the boundary value problem into an initial value problem with an integral constraint. In the uniqueness proof, we  analyze the initial slope of solutions of the initial value problem. Then the analysis on the initial slope allows us to find an explicit formula of the efficiency \eqref{eq-efficiency}. This is our second main result.

\begin{Thm}[Explicit Efficiency Formula]\label{thm-explicit-efficiency-formula}
Suppose that $T_h > T_c > 0$, $\alpha_0 \not=0$ and the assumptions on $\kappa$ and $\rho$ hold.
Then for any load ratio $\gamma \geq 0$, the efficiency $\eta=\eta(\gamma)$ for solutions of \eqref{te-eqn-constant-Seebeck} is well defined and it is explicitly given by
\begin{equation}\label{formula-efficiency}
\eta(\gamma) = \frac{\Delta T}{T_h} \frac{\gamma}{\gamma + 1 + \frac{(\gamma+1)^2}{zT_h} - \frac{1}{2}\frac{\Delta T}{T_h}}
\end{equation}
where $\Delta T := T_h - T_c > 0$ and
\begin{equation}\label{def-z}
z := \frac{\alpha_0^2}{\frac{1}{\Delta T}\int_{T_c}^{T_h}\rho(T) \kappa(T)\,dT}.
\end{equation}

The maximum efficiency is
\begin{equation}\label{eq-maximum-efficiency}
\eta_{\mathrm{max}} := \max_{\gamma \in [0, \infty)} \eta(\gamma) = \frac{\Delta T}{T_h} \frac{\sqrt{1 + z T_m} - 1}{\sqrt{1 + z T_m} + \frac{T_c}{T_h}}
\end{equation}
and it is attained at a unique load ratio
\begin{equation}\label{eq-optimal-load-ratio}
\gamma = \gamma_{\mathrm{opt}} := \sqrt{1 + z T_m}
\end{equation}
where $T_m := \frac{1}{2}(T_h + T_c)$.
\end{Thm}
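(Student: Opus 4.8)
For the proof of Theorem~\ref{thm-explicit-efficiency-formula}, the plan is to fix the unique solution $T\in C^2([0,L])$ provided by Theorem~\ref{thm-unique-existence-of-te-eqn}, with the associated constant $J:=J[T]$ and the number $I_L:=\int_0^L\rho(T(x))\,dx>0$, and to extract the quantity $\kappa(T_h)T_x(0)/J$ appearing in \eqref{eq-efficiency} from a first integral. Integrating $(\kappa(T)T_x)_x=-\rho(T)J^2$ over $[0,x]$ gives
\[
\kappa(T(x))\,T_x(x)=\kappa(T_h)\,T_x(0)-J^2 I(x),\qquad I(x):=\int_0^x\rho(T(s))\,ds .
\]
Multiplying this identity by $\rho(T(x))$ and using $\frac{d}{dx}P(T(x))=\rho(T(x))\kappa(T(x))T_x(x)$, where $P(\tau):=\int_{T_c}^{\tau}\rho(s)\kappa(s)\,ds$, turns it into $\frac{d}{dx}P(T(x))=\frac{d}{dx}\bigl[\kappa(T_h)T_x(0)\,I(x)-\tfrac12 J^2 I(x)^2\bigr]$; integrating over $[0,x]$ (the right side vanishing at $x=0$) yields
\[
P(T(x))-P(T_h)=\kappa(T_h)\,T_x(0)\,I(x)-\tfrac12 J^2 I(x)^2 .
\]

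Next I evaluate this at $x=L$, where $T=T_c$ so $P(T_c)=0$, obtaining $\kappa(T_h)T_x(0)I_L-\tfrac12 J^2 I_L^2=-P(T_h)$. The crucial point is that $J=\frac{V}{(1+\gamma)I_L}$ makes $J^2 I_L^2=\frac{V^2}{(1+\gamma)^2}$ independent of $I_L$, so
\[
\kappa(T_h)\,T_x(0)\,I_L=\frac{V^2}{2(1+\gamma)^2}-P(T_h).
\]
Since $T_h>T_c$ and $\alpha_0\neq0$ we have $V=\alpha_0\Delta T\neq0$ and $0<I_L<\infty$, hence $J\neq0$ and \eqref{eq-efficiency} applies. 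Multiplying the last display by $1/J=(1+\gamma)I_L/V$ makes $I_L$ cancel, giving $\frac{\kappa(T_h)T_x(0)}{J}=\frac{V}{2(1+\gamma)}-\frac{(1+\gamma)P(T_h)}{V}$ --- which is why the efficiency formula does not see the length $L$. Substituting this into \eqref{eq-efficiency}, together with $V=\alpha_0\Delta T$ and $P(T_h)=\int_{T_c}^{T_h}\rho(T)\kappa(T)\,dT=\alpha_0^2\Delta T/z$, cancelling $\alpha_0$, and multiplying numerator and denominator by $(1+\gamma)/T_h$, produces \eqref{formula-efficiency} after routine algebra; moreover the denominator of \eqref{eq-efficiency} equals $\alpha_0$ times the positive number $T_h-\frac{\Delta T}{2(1+\gamma)}+\frac{1+\gamma}{z}$, which confirms $\eta(\gamma)$ is well defined.

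For the maximum, set $a:=\frac{1}{zT_h}>0$ and $b:=1-\frac{\Delta T}{2T_h}=\frac12+\frac{T_c}{2T_h}>0$, so that \eqref{formula-efficiency} reads
\[
\eta(\gamma)=\frac{\Delta T}{T_h}\cdot\frac{1}{\,a\gamma+(a+b)\gamma^{-1}+1+2a\,}.
\]
The function $\gamma\mapsto a\gamma+(a+b)\gamma^{-1}$ is strictly convex on $(0,\infty)$ and, by the AM--GM inequality, attains its unique minimum $2\sqrt{a(a+b)}$ at $\gamma_{\mathrm{opt}}:=\sqrt{(a+b)/a}$; since $(a+b)/a=1+b/a=1+z\bigl(T_h-\tfrac{\Delta T}{2}\bigr)=1+zT_m$, this is \eqref{eq-optimal-load-ratio}, and because $\eta(0)=0<\eta(\gamma_{\mathrm{opt}})$ the maximum over $[0,\infty)$ is attained there. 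Substituting $\gamma_{\mathrm{opt}}$ back, writing $s:=\sqrt{1+zT_m}$ (so $zT_h=(s^2-1)T_h/T_m$) and factoring $s^2-1=(s-1)(s+1)$, collapses $\eta(\gamma_{\mathrm{opt}})$ to \eqref{eq-maximum-efficiency}.

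I do not expect a genuine obstacle: once the first integral above is in hand the argument is a finite computation, and its outcome is forced by the uniqueness in Theorem~\ref{thm-unique-existence-of-te-eqn}. The one point needing care is that $\rho$, $\kappa$ and $P$ are evaluated along the solution, i.e.\ that $T$ takes values in $[T_c,\infty)$; this is built into \eqref{te-eqn-constant-Seebeck} being well posed and is part of the a priori analysis behind Theorem~\ref{thm-unique-existence-of-te-eqn} (one checks that $\kappa(T)T_x$ is strictly decreasing, its $x$-derivative being $-\rho(T)J^2<0$, so $T$ is either monotone on $[0,L]$ or has a single interior maximum, whence $T([0,L])\subset[T_c,\infty)$).
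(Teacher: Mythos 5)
Your proof is correct, and its core step is obtained by a different route than the paper's. The paper reuses the machinery of its uniqueness proof: it passes to the transformed variable $u(y)=K(T(x))$, $y=|J|x$, identifies $-\kappa(T_h)T_x(0)/J$ with the shooting parameter $-\theta^*$ solving $I(\theta^*)=|V|/(1+\gamma)$, and reads off the hot-side relative Fourier heat flux from the explicit formula for $I(\theta)$ (Lemma \ref{lem-explicit-I}, restated as the lemma giving \eqref{hot-side-relative-fourier-heat-flux}); it then finds $\gamma_{\mathrm{opt}}$ by solving $\frac{d}{d\gamma}\eta(\gamma)=0$. You instead derive the same key identity directly in physical coordinates: integrating $(\kappa(T)T_x)_x=-\rho(T)J^2$ once, multiplying by $\rho(T)$, and integrating again is the energy/first-integral trick in $x$-space rather than in $u$-space, and the observation that $J^2I_L^2=V^2/(1+\gamma)^2$ is exactly what makes $L$ and $I_L$ drop out. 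This buys a more self-contained argument (no appeal to the $I(\theta)$ analysis beyond using Theorem \ref{thm-unique-existence-of-te-eqn} for existence and uniqueness) and, since you work with the signed $V$ and $J$ throughout, it sidesteps the $|V|$, $|J|$ sign bookkeeping that the paper's Lemma needs when $\alpha_0<0$; the paper's route, in exchange, gets the formula essentially for free from results already proved. Your maximization via the rewriting $a\gamma+(a+b)\gamma^{-1}$ and AM--GM is also a clean alternative to the paper's calculus computation and delivers uniqueness of $\gamma_{\mathrm{opt}}$ transparently. One cosmetic slip: to pass from $\kappa(T_h)T_x(0)\,I_L=\frac{V^2}{2(1+\gamma)^2}-P(T_h)$ to $\frac{\kappa(T_h)T_x(0)}{J}=\frac{V}{2(1+\gamma)}-\frac{(1+\gamma)P(T_h)}{V}$ you should multiply by $\frac{1}{J I_L}=\frac{1+\gamma}{V}$, not by $1/J$; the displayed result you state is the correct one, so this is only a wording fix.
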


Our Theorem \ref{thm-explicit-efficiency-formula} shows that the maximum efficiency can be computed by \eqref{eq-maximum-efficiency} \emph{without} solving the integro-differential equation \eqref{te-eqn-constant-Seebeck}. This enables a fast evaluation of thermoelectric generators when the Seebeck coefficients are slowly varying in temperature.

When the thermoelectric material properties $\kappa$, $\rho$, $\alpha$ are all constants, that is, $\kappa \equiv \kappa_0$, $\rho \equiv \rho_0$, and $\alpha = \alpha_0$ for some constants $\kappa_0 > 0$, $\rho_0 > 0$, $\alpha_0 \not= 0$, then the thermoelectric equation \eqref{te-eqn-constant-Seebeck} has an explicit solution in the form of a quadratic polynomial, and the efficiency is given in the same way as \eqref{formula-efficiency} with
\[ z = \frac{\alpha_0^2}{\rho_0 \kappa_0}. \]
As the maximum efficiency \eqref{eq-maximum-efficiency} is monotonically increasing with respect to $z$ \cite{ioffe1957semiconductor}, the $z$ is called the thermoelectric figure of merit.
Our Theorem \ref{thm-explicit-efficiency-formula} naturally generalizes this well-known result for the constant properties model.

Our Theorem \ref{thm-explicit-efficiency-formula} also explains several maximum efficiency formulas found from explicit temperature distributions.  Such examples are given in Section \ref{sec-explicit-efficiency-formula}.

In proving Theorem \ref{thm-explicit-efficiency-formula}, we can also find a necessary and sufficient condition for the strictly decreasing profile of temperature distribution in terms of $z$; see Proposition \ref{prop-decreasing-temperature-criterion}.

On the other hand, if one imposes an external electrical resistance directly, instead of the load ratio, then the unique existence of temperature distribution may fail. In Section \ref{sec-uniqueness-may-fail} we show that for any fixed $\kappa(T)$, there is a $\rho(T)$ such that the uniqueness fails.

\section{Existence and Uniqueness of Solutions}\label{sec-unique-existence}

In this section, we will prove Theorem \ref{thm-unique-existence-of-te-eqn}. The proof of Theorem \ref{thm-unique-existence-of-te-eqn} will be throughout a transformation of the equation.
To proceed, we need an auxiliary function.

\begin{Prop}\label{prop-K-diffeomorphism}
Suppose the assumption on $\kappa$ holds. Let 
\[ K(T) := T_c + \int_{T_c}^T \kappa(s)\,ds \quad \text{and} \quad K_\infty := \sup \{ K(T) : T \in [T_c, \infty)\}.\]
Then $K : [T_c, \infty) \to [T_c, K_\infty)$ is a diffeomorphism. Furthermore, $K, K^{-1} \in C^2([T_c, \infty))$.
\end{Prop}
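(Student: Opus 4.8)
The plan is to verify the three claims in turn: that $K$ maps $[T_c,\infty)$ onto $[T_c,K_\infty)$, that it is a bijection with smooth inverse, and finally the $C^2$-regularity of both $K$ and $K^{-1}$. First I would observe that by the assumption on $\kappa$ we have $\kappa\in C^1([T_c,\infty))$ and $\kappa>0$; hence the integrand $s\mapsto\kappa(s)$ is continuous, so $K(T)=T_c+\int_{T_c}^T\kappa(s)\,ds$ is well defined, and by the fundamental theorem of calculus $K'(T)=\kappa(T)>0$ for all $T\in[T_c,\infty)$. Thus $K$ is strictly increasing, in particular injective. Since $K'=\kappa\in C^1$, we immediately get $K\in C^2([T_c,\infty))$.

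Next I would identify the image. Since $K$ is continuous and strictly increasing, $K([T_c,\infty))$ is an interval with left endpoint $K(T_c)=T_c$ (attained) and right endpoint $\sup_{T\ge T_c}K(T)=K_\infty$ (not attained, whether $K_\infty$ is finite or $+\infty$, because $K$ is strictly increasing with no maximum). Hence $K:[T_c,\infty)\to[T_c,K_\infty)$ is a continuous strictly increasing bijection.

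For the inverse, the standard inverse function theorem applies at every point: since $K'(T)=\kappa(T)\neq0$ everywhere, $K^{-1}$ is differentiable on $[T_c,K_\infty)$ with $(K^{-1})'(y)=\dfrac{1}{\kappa(K^{-1}(y))}$. Because $\kappa\in C^1$ and is strictly positive, the right-hand side is a composition and reciprocal of $C^1$ functions, hence $C^1$; therefore $(K^{-1})'\in C^1$, i.e. $K^{-1}\in C^2([T_c,K_\infty))$. Combining the bijectivity with $K,K^{-1}\in C^2$ shows $K$ is a ($C^2$-)diffeomorphism, and the abuse of notation in the statement "$K^{-1}\in C^2([T_c,\infty))$" should read $C^2([T_c,K_\infty))$.

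There is no serious obstacle here; the only point requiring a little care is the behavior at the finite left endpoint $T_c$, where "diffeomorphism" must be understood in the one-sided (manifold-with-boundary) sense — one uses one-sided derivatives and the one-sided inverse function theorem — and the possibility that $K_\infty=+\infty$, in which case $[T_c,K_\infty)=[T_c,\infty)$ and the argument is unchanged. I would state these remarks briefly and conclude.
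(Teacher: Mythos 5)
Your proposal is correct and follows essentially the same route as the paper: establish $K'=\kappa>0$ and $K\in C^2$, show the range is exactly $[T_c,K_\infty)$ with the supremum not attained (strict monotonicity), and obtain $K^{-1}\in C^2$ from the formula $(K^{-1})'=1/(\kappa\circ K^{-1})$, just as the paper does via its explicit derivative formulas. Your remark that the statement's ``$K^{-1}\in C^2([T_c,\infty))$'' should be read as $C^2([T_c,K_\infty))$ matches what the paper's own proof actually establishes.
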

\begin{proof}
1. $K$ is $C^2([T_c, \infty))$. $K \in C^1([T_c, \infty))$ follows from the theory of Riemann integration because $\kappa(T)$ is continuous on $[T_c,\infty)$ (see, e.g., \cite[Theorem 6.20]{rudin76}). Furthermore an explicit computation shows $K'' = \kappa' \in C([T_c, \infty))$.

2. $K(T) < K_\infty$ for any $T \in [T_c, \infty)$. If $K_\infty = \infty$, the inequality is trivial as $K$ is continuous on $[T_c, \infty)$. Assume that $K_\infty < \infty$ and there is a $T_0 \in [T_c, \infty)$ such that $K(T_0) = K_\infty$. Then the definition of $K_\infty$ and the strict positivity of $\kappa$ imply that $K_\infty \geq K(T_0 + 1) = K(T_0) + \int_{T_0}^{T_0+1} \kappa(s)\,ds > K(T_0) = K_\infty$, which is a contradiction.

3. $K$ is one-to-one. Suppose that for some $T_1, T_2 \in [T_c, \infty)$ it holds that $K(T_1) = K(T_2)$ and $T_1 \geq T_2$. Then $0 = \int_{T_2}^{T_1}\kappa(s)\,ds \geq \kappa_m (T_1 - T_2) \geq 0$ where $\displaystyle \kappa_m := \min_{s \in [T_2, T_1]} \kappa(s) > 0$. Hence $\kappa_m (T_1-T_2) = 0$ so that $T_1 = T_2$.

4. $K$ is onto. Let $\mathcal{R}$ be the range of $K$. Then $K(T_c) = T_c \in \mathcal{R}$. As $K$ is monotonically increasing and $K(T) < K_\infty$ for any $T \in [T_c, \infty)$, $\mathcal{R} \subset [T_c, K_\infty)$. We show the reverse inclusion. Fix a $k_0 \in (T_c, K_\infty)$. Then by the definition of $K_\infty$, there is a $k_1 \in (k_0, K_\infty)$ and $T_1 \in [T_c, \infty)$ such that $k_1 = K(T_1)$. As $K$ is monotonically increasing and $K(T_1) = k_1 > k_0 > T_c = K(T_c)$, we have $T_1 > T_c$. Hence $K(T_c) < k_0 < K(T_1)$ for some $T_1 > T_c$. By the continuity of $K$ and the intermediate value theorem, there is a $T_0 \in (T_c, T_1)$ such that $K(T_0) = k_0$. Therefore $k_0 \in \mathcal{R}$. We have shown that $\mathcal{R} = [T_c, K_\infty)$.

5. $K^{-1}$ is $C^2([T_c, K_\infty))$. As we have shown that $K$ is a bijection, the $K^{-1}$ exists. Next the differentiation formulas $(K^{-1})'(K(T)) = 1/K'(T)= 1/\kappa(T)$ and $(K^{-1})''(K(T)) = - \kappa'(T) / (\kappa(T))^3$ conclude the proof.
\end{proof}

In the assumptions on $\kappa$ and $\rho$, we have considered the region $[T_c, \infty)$ only. This suffices because any solution $T$ of \eqref{te-eqn-constant-Seebeck} satisfies $T \geq T_c$ as Proposition \ref{prop-lower-bound-of-T} shows.

\begin{Lem}\label{lem-lower-bound-of-concave-func}
Let $f \in C([0, L])$ be concave on $[0, L]$. If $f(0) \geq f(L)$, then $f(x) \geq f(L)$ for all $x \in (0,L)$.

Furthermore, if $f(0) > f(L)$, then $f(x) > f(L)$ for all $x \in (0,L)$.
\end{Lem}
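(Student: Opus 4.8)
The plan is to use the definition of concavity directly, with no appeal to differentiability or to the maximum principle. Fix $x \in (0, L)$ and write it as the convex combination $x = (1-t)\cdot 0 + t \cdot L$ with $t := x/L \in (0,1)$. Concavity of $f$ on $[0,L]$ then gives
\[ f(x) \geq (1-t) f(0) + t f(L). \]
Since $1 - t > 0$ and $f(0) \geq f(L)$, we have $(1-t) f(0) \geq (1-t) f(L)$, hence
\[ f(x) \geq (1-t) f(L) + t f(L) = f(L), \]
which is the first assertion.

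For the second assertion I would simply track where the inequality can be made strict. If $f(0) > f(L)$, then because the coefficient $1-t$ is strictly positive for $x \in (0,L)$, the estimate $(1-t) f(0) > (1-t) f(L)$ is strict, and the same chain of inequalities yields $f(x) > f(L)$ for every $x \in (0,L)$.

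There is essentially no obstacle here: the only point worth a word is what ``concave'' means. I will take it in the standard sense that $f\big((1-t)a + tb\big) \geq (1-t) f(a) + t f(b)$ for all $a,b \in [0,L]$ and $t \in [0,1]$; if one prefers to start from midpoint concavity, continuity of $f \in C([0,L])$ upgrades it to this full form, so the argument is unaffected. The lemma will be invoked with $f = T$ (or a composition of $T$ with a suitable increasing function) to derive the lower bound $T \geq T_c$ on solutions of \eqref{te-eqn-constant-Seebeck}, which is why only the one-sided comparison with the endpoint value $f(L)$ is needed.
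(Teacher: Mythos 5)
Your argument is correct and is essentially identical to the paper's proof: both write $x$ as the convex combination $(1-t)\cdot 0 + t\cdot L$ with $t = x/L \in (0,1)$, apply the concavity inequality, and use $1-t>0$ together with $f(0)\geq f(L)$ (respectively $f(0)>f(L)$) to conclude. Nothing further is needed.
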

\begin{proof}
For any $x \in (0,L)$, let $\alpha := \frac{x}{L} \in (0,1)$ then $x = (1-\alpha) \cdot 0 + \alpha \cdot L$. By the concavity of $f$,
\[ f(x) = f((1-\alpha) \cdot 0 + \alpha \cdot L) \geq (1-\alpha) f(0) + \alpha f(L) \geq f(L) \]
if $f(0) \geq f(L)$. If $f(0) > f(L)$, then the last inequality becomes strict.
\end{proof}

\begin{Prop}\label{prop-lower-bound-of-T}
Let $\kappa \in C^1(\R)$ and $\rho \in C(\R)$ be strictly positive functions on $\R$. If $T \in C^2([0, L])$ is a classical solution of \eqref{te-eqn-constant-Seebeck}, then $T(x) \geq T_c$ for all $x \in [0, L]$.
\end{Prop}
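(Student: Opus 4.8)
The plan is to reduce the statement to the concavity lemma (Lemma~\ref{lem-lower-bound-of-concave-func}) by means of the Kirchhoff-type substitution that linearizes the diffusion operator. First I would note that the current density $J[T]$ is a well-defined real constant: since $\rho>0$ and $1+\gamma\geq 1$, the total resistance $R[T]=\frac{1+\gamma}{A_c}\int_0^L\rho(T(x))\,dx$ is a finite positive number, so $J[T]=V/(R[T]A_c)\in\R$; in particular the reaction term $\rho(T(x))\,J[T]^2$ is nonnegative for every $x\in[0,L]$ (the degenerate case $J[T]=0$ is harmless here).

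Next I would set $u(x):=\int_{T_c}^{T(x)}\kappa(s)\,ds$ for $x\in[0,L]$. Since $T\in C^2([0,L])$ and $\kappa\in C^1(\R)$, we have $u\in C^2([0,L])$ with $u_x=\kappa(T)T_x$, hence $u_{xx}=(\kappa(T)T_x)_x=-\rho(T)\,J[T]^2\leq 0$ by the equation in \eqref{te-eqn-constant-Seebeck}. Thus $u$ is concave on $[0,L]$. The boundary conditions give $u(L)=\int_{T_c}^{T_c}\kappa=0$ and $u(0)=\int_{T_c}^{T_h}\kappa(s)\,ds\geq 0$, because $T_h\geq T_c$ and $\kappa>0$; in particular $u(0)\geq u(L)$.

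Then I would apply Lemma~\ref{lem-lower-bound-of-concave-func} to the concave function $u$ to conclude $u(x)\geq u(L)=0$ for all $x\in(0,L)$, and therefore $u(x)\geq 0$ on all of $[0,L]$. Finally, the map $s\mapsto\int_{T_c}^s\kappa(\sigma)\,d\sigma$ is strictly increasing on $\R$ (as $\kappa>0$) and vanishes at $s=T_c$, so $u(x)\geq 0$ forces $T(x)\geq T_c$ for every $x\in[0,L]$, which is the assertion.

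The only subtlety worth flagging is that the diffusion term is $(\kappa(T)T_x)_x$ rather than $T_{xx}$, so a naive maximum principle on $T$ itself does not directly apply; passing to $u=\int_{T_c}^{T}\kappa$ — essentially the transform already used to build $K$ in Proposition~\ref{prop-K-diffeomorphism} — turns the equation into $u_{xx}\leq 0$ and makes the concavity argument available. Everything else is bookkeeping of signs and boundary values, so I do not anticipate a genuine obstacle.
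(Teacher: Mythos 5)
Your proof is correct and follows essentially the same route as the paper: the Kirchhoff transform $u=\int_{T_c}^{T}\kappa$ (the paper's $K(T)$ up to an additive constant), concavity from $u_{xx}=-\rho(T)J[T]^2\leq 0$, and Lemma~\ref{lem-lower-bound-of-concave-func} applied with the boundary values $u(0)\geq u(L)$. The only (harmless) difference is that by defining the transform directly on $\R$ you sidestep the fact that $K$ in Proposition~\ref{prop-K-diffeomorphism} was only defined on $[T_c,\infty)$, which the paper uses implicitly.
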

\begin{proof}
From the boundary condition, the inequality holds at $x = 0$ and $x = L$.

From the equation \eqref{te-eqn-constant-Seebeck}, $(K(T))_{xx} = (\kappa(T) T_x)_x = -\rho(T) J[T]^2 \leq 0$ for all $x \in [0, L]$. Hence $K(T) \in C^2([0,L])$ is concave on $[0, L]$. Since $K(T(0)) = K(T_h) \geq K(T_c) = K(T(L))$, by Lemma \ref{lem-lower-bound-of-concave-func},
\[ K(T(x)) \geq K(T(L)) = K(T_c)\]
for all $x \in (0, L)$. Therefore $T(x) \geq T_c$ for all $x \in (0,L)$.
\end{proof}

Suppose that $V = 0$. Then $J[T] = 0$ and the equation \eqref{te-eqn-constant-Seebeck} reduces to $(\kappa(T)T_x)_x = (K(T))_{xx} = 0$ with  the Dirichlet boundary condition $T(0)=T_h$, $T(L)=T_c$. Hence there is a unique classical solution
\[
T(x) = K^{-1} \Big( \frac{K(T_c)-K(T_h)}{L}x + K(T_h) \Big).
\]

Now suppose that $V \not= 0$. Then $T_h > T_c$, and $J[T] \not= 0$ because $\rho(T) > 0$.
For a solution $T \in C^2([0,L])$ of \eqref{te-eqn-constant-Seebeck}, let 
\begin{equation}\label{eqn-y-and-u}
y := |J[T]| \,x \quad \text{and} \quad u(y) := K(T(x)).
\end{equation}
Since
\begin{equation}\label{eqn-u-y-and-u-yy}
u_y=\frac{1}{|J[T]|} \kappa(T)T_x \quad \text{and} \quad u_{yy}= \frac{1}{J[T]^2} (\kappa(T)T_x)_x,
\end{equation}
the equation \eqref{te-eqn-constant-Seebeck} becomes 
\begin{equation}\label{eqn-u}
\left\{
\begin{aligned}
&u_{yy} + \hat{\rho}(u) = 0 \quad \text{for $0<y <y_c$},\\
&u(0)=u_h:=K(T_h), ~u(y_c)=u_c:=K(T_c)=T_c,
\end{aligned}
\right.
\end{equation}
where $y_c = |J[T]|\,L$ and
\[ \hat{\rho} := \rho \circ K^{-1}. \]
In addition, the condition $R[T]=\frac{1+\gamma}{A_c}\int_0^L \rho(T(x))\,dx$ becomes
\begin{equation}\label{integral-constraint-u}
\frac{|V|}{1+\gamma}=\int_0^{y_c} \hat{\rho}(u(y))\,dy.
\end{equation}
Proposition \ref{prop-rho-hat-well-defined} shows that the function $u(y)$ is in the range of $K$ hence $\hat{\rho}(u(y)) = (\rho \circ K^{-1})(u(y))$ is well defined.
\begin{Lem}\label{lem-not-touch-K-infty}
Suppose that $V \not= 0$, $\gamma\geq 0$ and the assumptions on $\kappa$ and $\rho$ hold. For a fixed $y_c > 0$, let $u \in C^2([0,y_c])$ be a function satisfying (i) $u \in [T_c, K_\infty)$ for all $y \in (0, y_c)$, (ii) $u_{yy} + \hat{\rho}(u) = 0$ for all $y \in (0, y_c)$, and (iii) $u(0)=u_h=K(T_h)$.
Then
\[ u(y_c) \in [T_c, K_\infty).\]
\end{Lem}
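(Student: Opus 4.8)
The plan is to use an energy (first-integral) identity for the ODE together with the divergence hypothesis \eqref{condition-rho-times-kappa}. First I would record a structural fact about $\hat\rho$. Since $\rho$ is continuous (being locally Lipschitz) and positive on $[T_c,\infty)$ and $K^{-1}\in C^2$ by Proposition~\ref{prop-K-diffeomorphism}, the composition $\hat\rho=\rho\circ K^{-1}$ is continuous and positive on $[T_c,K_\infty)$; hence $\hat R(v):=\int_{T_c}^{v}\hat\rho(s)\,ds$ is a $C^1$, strictly increasing function on $[T_c,K_\infty)$ with $\hat R'=\hat\rho$. The substitution $s=K(T)$, $ds=\kappa(T)\,dT$, gives $\hat R(v)=\int_{T_c}^{K^{-1}(v)}\rho(T)\kappa(T)\,dT$, so that $\hat R(v)\to\int_{T_c}^{\infty}\rho(T)\kappa(T)\,dT=+\infty$ as $v\to K_\infty^{-}$, using $K^{-1}(v)\to\infty$ and \eqref{condition-rho-times-kappa}. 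This blow-up is the engine of the argument.

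Next I would multiply equation (ii) by $u_y$. Because $u(y)\in[T_c,K_\infty)$ for $y\in(0,y_c)$ by (i), the chain rule applies to $\hat R(u(y))$, and $\frac{d}{dy}\big(\tfrac12 u_y^2+\hat R(u)\big)=u_y\big(u_{yy}+\hat\rho(u)\big)=0$ on $(0,y_c)$. Hence $E(y):=\tfrac12 u_y(y)^2+\hat R(u(y))$ is a finite constant $E$ on $(0,y_c)$ — finite because $u\in C^2([0,y_c])$ makes $u_y$ bounded and $\hat R$ is finite on $[T_c,K_\infty)$. In particular $\hat R(u(y))=E-\tfrac12 u_y(y)^2\le E$ for all $y\in(0,y_c)$.

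Then I would conclude by contradiction. By (i) and continuity of $u$ at $y_c$ we get $u(y_c)\ge T_c$, so the only way to fail the claim is $u(y_c)\ge K_\infty$; again by (i) and continuity $u(y_c)\le K_\infty$, forcing $u(y_c)=K_\infty$. If $K_\infty=\infty$ this already contradicts $u\in C^2([0,y_c])$ (which takes finite values). If $K_\infty<\infty$, then $u(y)\to K_\infty^{-}$ as $y\to y_c^{-}$, whence $\hat R(u(y))\to+\infty$ by the first step, contradicting $\hat R(u(y))\le E$. Therefore $u(y_c)\in[T_c,K_\infty)$.

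I expect the only delicate points to be bookkeeping rather than conceptual: that the first integral is well defined on the open interval $(0,y_c)$ precisely because hypothesis (i) keeps $u$ inside $[T_c,K_\infty)$ there, and the careful justification of $\hat R(u(y))\to+\infty$ via strict monotonicity of $\hat R$ together with $K^{-1}(u(y))\to\infty$. Note that concavity of $u$ (which follows from $u_{yy}=-\hat\rho(u)<0$) is \emph{not} needed here, and the hypotheses $V\neq0$, $\gamma\ge0$ play no role in this lemma; they are retained only to match the context in which it will later be invoked.
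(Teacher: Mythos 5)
Your proof is correct and rests on the same idea as the paper's: multiply the equation by $u_y$ to get a first integral and use the substitution $s=K(T)$ so that the divergence hypothesis \eqref{condition-rho-times-kappa} forces $\int \hat{\rho}$ to blow up as $u \to K_\infty^-$, contradicting finiteness of the energy. The only (cosmetic) difference is that you phrase this via the conserved quantity $\tfrac12 u_y^2+\hat R(u)$ bounded by a fixed constant $E$, whereas the paper integrates the identity over $[0,y_n]$ along a sequence $y_n$ with $u(y_n)=K(n)$ and passes to the limit; your observation that $V\neq 0$, $\gamma\ge 0$ and concavity are not needed here is also consistent with the paper's argument.
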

\begin{proof}
The lower bound of $u(y_c)$ follows from the continuity of $u$.

We prove the upper bound of $u(y_c)$. If $K_\infty = \infty$, the conclusion is trivial. Suppose that $u(y_c) = K_\infty < \infty$. Then there is an increasing sequence $\{y_n\} \subset [0, y_c)$ for all sufficiently large $n$ such that $u(y_n) = K(n)$ and $u(y) \leq K(n)$ for all $y \in [0, y_n]$. By multiplying $-u_y$ on both sides of $u_{yy} + \hat{\rho}(u) = 0$, we have
\[
-\frac{1}{2}\frac{d}{dy} (u_y)^2 = \hat{\rho}(u) u_y.
\]
This equality is well defined for all $y \in [0, y_n]$. Integrating the equality over $[0, y_n]$, we have
\[
\frac{1}{2}(u_y(0)^2 - u_y(y_n)^2) = \int_0^{y_n} \hat{\rho}(u) u_y\,dy.
\]
Note that $u \in C([0, y_n])$, $u([0, y_n]) \subset [T_c, K(n)]$ and $\hat{\rho} \in C([T_c, K(n)])$. Hence we can apply the integration by substitution to the right-hand side:
\[
\int_0^{y_n} \hat{\rho}(u)\,u_y\,dy = \int_{K(T_h)}^{K(n)} \hat{\rho}(u)\,du = \int_{T_h}^{n} \rho(T)\kappa(T)\,dT.
\]
In summary, we have
\[
\frac{1}{2}(u_y(0)^2 - u_y(y_n)^2) = \int_{T_h}^{n} \rho(T)\kappa(T)\,dT.
\]
Taking the limit as $n\to \infty$, we have
\[
\frac{1}{2}(u_y(0)^2 - u_y(y_c)^2) = \int_{T_h}^{\infty} \rho(T)\kappa(T)\,dT = \infty
\]
by the assumption \eqref{condition-rho-times-kappa}. Hence $u_y(y_c)^2 = \infty$ but this is a contradiction.
\end{proof}

\begin{Prop}\label{prop-rho-hat-well-defined}
Suppose that $V \not= 0$, $\gamma\geq 0$ and the assumptions on $\kappa$ and $\rho$ hold. Let $u \in C^2([0,y_c])$ be a solution of \eqref{eqn-u} for some $y_c > 0$ under an extension of $\hat{\rho}$ by $\hat{\rho}(u) \equiv 0$ for all $u \in \R \setminus [T_c, K_\infty)$.
Then $u(y) \in [T_c, K_\infty)$ for all $y \in [0, y_c]$. Hence the extension of $\hat{\rho}$ is not necessary.
\end{Prop}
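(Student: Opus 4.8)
The plan is to split the claim into a lower bound $u(y) \ge T_c$ and an upper bound $u(y) < K_\infty$, deriving the first from concavity (via Lemma~\ref{lem-lower-bound-of-concave-func}) and the second from Lemma~\ref{lem-not-touch-K-infty} applied on a suitable subinterval. All of the analytic content — that a solution cannot touch $K_\infty$ from below — is already packaged in Lemma~\ref{lem-not-touch-K-infty}; the only new step is a careful localization argument, which I expect to be the one delicate point.

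\emph{Lower bound.} First I would observe that the extended $\hat\rho$ is nonnegative on all of $\R$: on $[T_c, K_\infty)$ it equals $\rho \circ K^{-1} > 0$ by the assumption on $\rho$ together with Proposition~\ref{prop-K-diffeomorphism}, while it vanishes elsewhere by definition. Hence $u_{yy} = -\hat\rho(u) \le 0$ on $(0, y_c)$, so $u \in C^2([0,y_c])$ is concave. Since $V \ne 0$ forces $T_h > T_c$, we have $u(0) = K(T_h) \ge K(T_c) = T_c = u(y_c)$, so Lemma~\ref{lem-lower-bound-of-concave-func} gives $u(y) \ge T_c$ for all $y \in [0, y_c]$.

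\emph{Upper bound.} If $K_\infty = \infty$ there is nothing to prove, since $u$ is bounded on the compact interval $[0, y_c]$. So assume $K_\infty < \infty$ and, toward a contradiction, that $\max_{[0,y_c]} u \ge K_\infty$. By Proposition~\ref{prop-K-diffeomorphism} we have $u(0) = K(T_h) < K_\infty$ and $u(y_c) = T_c < K_\infty$, so by the intermediate value theorem the preimage of the closed set $\{K_\infty\}$ under the continuous map $u$ is nonempty; let $y_0$ be its minimum. Then $y_0 \in (0, y_c)$, $u(y_0) = K_\infty$, and $u(y) < K_\infty$ for every $y \in [0, y_0)$: indeed, if $u$ exceeded $K_\infty$ at some point of $[0, y_0)$, a further application of the intermediate value theorem between $0$ and that point would produce a solution of $u = K_\infty$ below $y_0$, contradicting minimality. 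Now I would invoke Lemma~\ref{lem-not-touch-K-infty} with $y_0$ in the role of $y_c$: the restriction $u|_{[0,y_0]}$ lies in $C^2([0,y_0])$, and it satisfies (i) $u(y) \in [T_c, K_\infty)$ for all $y \in (0, y_0)$ by the lower bound and the choice of $y_0$, (ii) $u_{yy} + \hat\rho(u) = 0$ on $(0, y_0)$, and (iii) $u(0) = K(T_h)$. The lemma then forces $u(y_0) \in [T_c, K_\infty)$, contradicting $u(y_0) = K_\infty$.

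Combining the two bounds yields $u(y) \in [T_c, K_\infty)$ for all $y \in [0, y_c]$; consequently $\hat\rho(u(y))$ never uses the value $0$ assigned outside $[T_c, K_\infty)$, so the extension of $\hat\rho$ is immaterial and \eqref{eqn-u} may be read with the original $\hat\rho = \rho \circ K^{-1}$. The main obstacle is exactly the bookkeeping around $y_0$ and the verification that the hypotheses of Lemma~\ref{lem-not-touch-K-infty} hold on the truncated interval $[0, y_0]$; the rest is immediate.
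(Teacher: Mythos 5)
Your proposal is correct and follows essentially the same route as the paper: the lower bound via concavity of $u$ and Lemma~\ref{lem-lower-bound-of-concave-func}, and the upper bound by contradiction at the first point where $u$ reaches $K_\infty$, applying Lemma~\ref{lem-not-touch-K-infty} on the truncated interval. Your bookkeeping with $y_0$ (the minimum of $u^{-1}(\{K_\infty\})$) is just a slightly more explicit version of the paper's $y_\infty = \inf\{y : u(y) \geq K_\infty\}$, so there is no substantive difference.
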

\begin{proof}
The lower bound follows from Lemma \ref{lem-lower-bound-of-concave-func}.

We prove the upper bound. Suppose that $u(y) \geq K_\infty$ for some $y \in (0, y_c)$. Let $y_\infty := \inf \{ y \in (0, y_c): u(y) \geq K_\infty \}$. As $u(0) = K(T_h) < K_\infty$, $y_\infty > 0$. Then there is a small $\epsilon > 0$ such that $u(y) \in [T_c, K_\infty)$ for all $y \in [y_\infty-\epsilon, y_\infty)$. By Lemma \ref{lem-not-touch-K-infty}, $u(y_\infty) < K_\infty$ but this contradicts the definition of $y_\infty$. Hence $u(y) < K_\infty$ for all $y \in [0, y_c]$.
\end{proof}

If we consider the problem \eqref{eqn-u}, \eqref{integral-constraint-u} for a \emph{given} constant $y_c > 0$, we can revert to the original equation \eqref{te-eqn-constant-Seebeck} as Theorem \ref{thm-one-to-one-local-eqn} shows.

\begin{Thm}\label{thm-one-to-one-local-eqn}
Suppose that $V \not= 0$, $\gamma \geq 0$ and the assumptions on $\kappa$ and $\rho$ hold.
Then there is a one-to-one correspondence between a solution $T \in C^2([0,L])$ of \eqref{te-eqn-constant-Seebeck} and a solution $u$ of the following \emph{local} equation: for some $y_c > 0$, $u \in C^2([0,y_c])$ satisfies
\begin{equation}\label{local-eqn}
\left\{
\begin{aligned}
&u_{yy} + \hat{\rho}(u) = 0 \quad \text{for $0<y<y_c$},\\
&u(0)=u_h:=K(T_h), ~u(y_c) = u_c:=K(T_c),\\
&\int_0^{y_c} \hat{\rho}(u(y))  \,dy = \frac{|V|}{1+\gamma}.
\end{aligned}
\right.
\end{equation}
\end{Thm}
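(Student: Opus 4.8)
The plan is to exhibit the correspondence explicitly in both directions and then check that the two constructions are mutually inverse. In the forward direction, given a solution $T \in C^2([0,L])$ of \eqref{te-eqn-constant-Seebeck}, I would first note that $T \geq T_c$ on $[0,L]$ by Proposition \ref{prop-lower-bound-of-T}, so $K(T(x))$ is well defined by Proposition \ref{prop-K-diffeomorphism}, and that $J[T] \neq 0$ since $V \neq 0$ and $\rho > 0$ force $R[T] \in (0,\infty)$. Then, setting $y := |J[T]|\,x$, $u(y) := K(T(x))$ and $y_c := |J[T]|\,L$ as in \eqref{eqn-y-and-u}, the identities \eqref{eqn-u-y-and-u-yy} together with the chain rule turn the diffusion equation into $u_{yy} + \hat{\rho}(u) = 0$ and the Dirichlet data into $u(0) = u_h$, $u(y_c) = u_c$, while the substitution $y = |J[T]|\,x$ in $\int_0^L \rho(T(x))\,dx$ turns the nonlocal relation $R[T] = \tfrac{1+\gamma}{A_c}\int_0^L \rho(T(x))\,dx$ into $\int_0^{y_c}\hat{\rho}(u(y))\,dy = |V|/(1+\gamma)$. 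This is exactly the computation already carried out in \eqref{eqn-y-and-u}--\eqref{integral-constraint-u}, so the forward map $T \mapsto u$ into solutions of \eqref{local-eqn} is in place.

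For the backward direction, given $y_c > 0$ and $u \in C^2([0,y_c])$ solving \eqref{local-eqn}, I would define $T(x) := K^{-1}\bigl(u(y_c x/L)\bigr)$ for $x \in [0,L]$. By Proposition \ref{prop-rho-hat-well-defined} we have $u(y) \in [T_c, K_\infty)$, so $T$ is well defined, and since $K^{-1} \in C^2$ and $u \in C^2$ we get $T \in C^2([0,L])$; the boundary conditions $T(0) = T_h$, $T(L) = T_c$ follow from $u(0) = K(T_h)$, $u(y_c) = K(T_c)$. The crucial point — and the one I expect to be the only non-routine step — is verifying that the nonlocal quantity $J[T]$ computed from this reconstructed $T$ satisfies $|J[T]| = y_c/L$, so that the rescaling is self-consistent and \eqref{te-eqn-constant-Seebeck} holds rather than merely a rescaled version of it. This is where the integral constraint in \eqref{local-eqn} is used: substituting $y = y_c x/L$ gives
\[
\int_0^L \rho(T(x))\,dx = \frac{L}{y_c}\int_0^{y_c}\hat{\rho}(u(y))\,dy = \frac{L}{y_c}\cdot\frac{|V|}{1+\gamma},
\]
whence $|J[T]| = \dfrac{|V|}{(1+\gamma)\int_0^L \rho(T(x))\,dx} = \dfrac{y_c}{L}$. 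Consequently $K(T(x)) = u(|J[T]|\,x)$, so $(\kappa(T)T_x)_x = (K(T))_{xx} = |J[T]|^2 u_{yy} = -|J[T]|^2 \hat{\rho}(u) = -\rho(T)\,J[T]^2$, which is \eqref{te-eqn-constant-Seebeck}; hence $u \mapsto T$ sends solutions of \eqref{local-eqn} to solutions of \eqref{te-eqn-constant-Seebeck}.

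Finally I would check that the two maps are mutual inverses. Composing forward-then-backward, the reconstructed temperature is $K^{-1}\bigl(u(y_c x/L)\bigr) = K^{-1}\bigl(K(T(x))\bigr) = T(x)$, since $y_c/L = |J[T]|$; composing backward-then-forward, the reconstructed function has right endpoint $|J[T]|\,L = y_c$ and value $K\bigl(T(y/|J[T]|)\bigr) = K\bigl(K^{-1}(u(y))\bigr) = u(y)$. Thus the correspondence is a bijection. The only genuine obstacle is the self-consistency identity $|J[T]| = y_c/L$ in the backward direction; everything else reduces to the chain rule, one change of variables, and the regularity of $K$ and $K^{-1}$ recorded in Propositions \ref{prop-K-diffeomorphism} and \ref{prop-rho-hat-well-defined}.
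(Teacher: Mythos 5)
Your proposal is correct, and it uses the same change of variables \eqref{eqn-y-and-u} and the same pair of maps (call them $\mathcal{F}: T \mapsto u$ and $\mathcal{G}: u \mapsto T$) as the paper; the difference lies in how the bijection is closed. You verify directly that the two compositions are identities: $\mathcal{G}(\mathcal{F}(T)) = T$ is immediate from $K^{-1}\circ K = \mathrm{id}$, and $\mathcal{F}(\mathcal{G}(u)) = u$ hinges on the self-consistency identity $|J[T]| = y_c/L$, which you derive from the integral constraint in \eqref{local-eqn} by the change of variables $y = y_c x/L$ --- this is exactly the step the paper asserts with ``we can check that $\mathcal{G}(u)$ is a solution of \eqref{te-eqn-constant-Seebeck} with $|J[T]| = y_c/L$,'' so spelling it out is the right instinct. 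The paper instead proves injectivity of $\mathcal{F}$ and of $\mathcal{G}$ separately, and both of those arguments lean on concavity (Lemma \ref{lem-lower-bound-of-concave-func}): for $\mathcal{F}$, the case $|J[T_1]| \neq |J[T_2]|$ is excluded by a convexity-combination estimate forcing $u_h = u_c$ and hence $V = 0$; for $\mathcal{G}$, the uniqueness of the point where $u$ hits $u_c$ rules out mismatched endpoints $y_c[u_1] \neq y_c[u_2]$. Your mutual-inverse route avoids those concavity arguments entirely and is the more economical way to get a one-to-one correspondence, provided a ``solution of \eqref{local-eqn}'' is understood as the pair $(y_c, u)$ with its endpoint recorded --- which is the natural reading and the one your composition checks implicitly use; the paper's extra injectivity cases are essentially guarding against identifying two $u$'s that agree only on an overlap of their domains. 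Either way the substance is the same, and your account of the forward direction (well-definedness via Propositions \ref{prop-lower-bound-of-T}, \ref{prop-K-diffeomorphism}, \ref{prop-rho-hat-well-defined}, and $J[T] \neq 0$ from $V \neq 0$, $\rho > 0$) matches the paper's.
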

\begin{proof}
1. Let $T\in C^2([0,L])$ be a solution of \eqref{te-eqn-constant-Seebeck}. 
Since $V \not= 0$, the transformation \eqref{eqn-y-and-u} is well defined. Let this correspondence from $T$ to $u$ be $\mathcal{F}$. We have verified that the equation \eqref{integral-constraint-u} is satisfied with $u = \mathcal{F}(T)$.

$\mathcal{F}$ is injective. Suppose that $\mathcal{F}(T_1)=\mathcal{F}(T_2)=u$. 
If $|J[T_1]|=|J[T_2]|$, then $T_1=T_2$ since the spatial scaling is the same and $K$ is invertible.
Suppose that $|J[T_1]|< |J[T_2]|$. Note that $u$ is a concave function because it satisfies $u_{yy} = -\hat{\rho}(u) < 0$.
Hence with $y_c[T] := |J[T]|\, L$,
\[\begin{aligned}
u_c&=u(y_c[T_1])=u\Big(\frac{|J[T_1]|}{|J[T_2]|}y_c[T_2]\Big)=u\Big(\Big( 1-\frac{|J[T_1]|}{|J[T_2]|}\Big)\cdot 0+\frac{|J[T_1]|}{|J[T_2]|}y_c[T_2]\Big)\\
&\geq \Big( 1-\frac{|J[T_1]|}{|J[T_2]|}\Big)u(0)+\frac{|J[T_1]|}{|J[T_2]|}u(y_c[T_2]) = \Big( 1-\frac{|J[T_1]|}{|J[T_2]|}\Big)u_h+\frac{|J[T_1]|}{|J[T_2]|}u_c.
\end{aligned}\]
This implies that $u_c \geq u_h$. Since $u_h \geq u_c$, we have $u_h = u_c$. Then $T_h = T_c$ so that $V = 0$, which is a contradiction.

2. Let $u \in C^2([0,y_c])$ be a solution of \eqref{eqn-u}, \eqref{integral-constraint-u}. Define
\[ x := \frac{L}{y_c} y \quad \text{and} \quad T(x) := K^{-1}(u(y)).\]
By Proposition \ref{prop-rho-hat-well-defined}, $u(y)$ is in the domain of $K^{-1}$ hence this is well defined.
Let $\mathcal{G}$ be this correspondence such that $\mathcal{G}: u \mapsto T$. Then we can check that $\mathcal{G}(u)$ is a solution of \eqref{te-eqn-constant-Seebeck} with
\[ |J[T]| = y_c/L.\]
Also we can check that $\mathcal{F} (\mathcal{G}(u))(y) = u(y)$ for all $0 \leq y \leq y_c$.

$\mathcal{G}$ is injective. Suppose that $\mathcal{G}(u_1)=\mathcal{G}(u_2)=T$. Then $\mathcal{F}(\mathcal{G}(u_1))=\mathcal{F}(\mathcal{G}(u_2))=\mathcal{F}(T)$. Because $\mathcal{F} (\mathcal{G}(u_i))(y) = u_i(y)$ for all $y \in [0, y_c[u_i]]$ and $i=1,2$, we have $u_1(y) = u_2(y)$ for all $y \in [0, \min\{y_c[u_1],y_c[u_2]\}]$. If $y_c[u_1] < y_c[u_2]$, then $u_2(y_c[u_1]) = u_1(y_c[u_1]) = u_c = u_2(y_c[u_2])$. But this contradicts the fact that $u_2$ has a unique $y_c$ such that $u_2(y_c) = u_c$; this follows from Lemma \ref{lem-lower-bound-of-concave-func}.
Hence $y_c[u_1] = y_c[u_2]$ and the injectivity is proved.
\end{proof}

Since a solution of the thermoelectric equation \eqref{te-eqn-constant-Seebeck} corresponds to exactly one solution of \eqref{local-eqn} and vice versa, we focus on the problem \eqref{local-eqn} from now on. Consider the following initial value problem:
\begin{equation}\label{ivp}
\left\{
\begin{aligned}
&u_{yy} + \hat{\rho}(u) = 0 \quad \text{for $y > 0$},\\
&u(0)=u_h:=K(T_h), ~u_y(0) = \theta,
\end{aligned}
\right.
\end{equation}
with a given $\theta \in \R$. By virtue of the local Lipschitz continuity of $\hat{\rho}$, this problem has a unique local solution (see, e.g., \cite[Theorem 2.2]{barbu2016}). Furthermore, the solution can be extended to match the boundary condition $u=u_c:=K(T_c)$ as the following proposition shows.

\begin{Prop}\label{prop-ivp-can-match-bc}
Suppose that $T_c = u_c < u_h < K_\infty$, $\theta \in \R$, and the assumptions on $\kappa$ and $\rho$ hold. Then there is a unique $y_c > 0$ such that there is a solution $u \in C^2([0, y_c])$ of \eqref{ivp} and
\[
u(y_c) = u_c.
\]
\end{Prop}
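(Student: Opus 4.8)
The plan is to treat \eqref{ivp} as an autonomous second–order ODE, follow its unique maximal solution inside the open set on which $\hat{\rho}$ lives, show that this solution descends to the level $T_c$ at a single instant $y_c$, and then restrict it to $[0,y_c]$ to answer the proposition.

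First I would apply the local existence theorem cited just after \eqref{ivp} to obtain the unique solution $u$ on a maximal interval $[0,Y)$ along which $u(y)\in(T_c,K_\infty)$, where $\hat{\rho}$ is locally Lipschitz and strictly positive; standard continuation then guarantees that as $y\to Y^-$ the pair $(u,u_y)$ either escapes to $Y=\infty$ or leaves every compact subset of $(T_c,K_\infty)\times\R$. On $[0,Y)$ I would derive the energy identity exactly as in the proof of Lemma \ref{lem-not-touch-K-infty} (multiply the equation by $-u_y$, integrate over $[0,y]$, and substitute $v=K(T)$):
\[
 u_y(y)^2 = \theta^2 - 2\int_{T_h}^{K^{-1}(u(y))}\rho(T)\kappa(T)\,dT .
\]
Since the left side is nonnegative and, by \eqref{condition-rho-times-kappa}, $T\mapsto\int_{T_h}^{T}\rho(T)\kappa(T)\,dT$ increases to $+\infty$, there is a finite $T^\ast$ depending only on $\theta$ with $K^{-1}(u(y))\le T^\ast$, so $u$ is trapped in the compact set $[T_c,K(T^\ast)]\subset[T_c,K_\infty)$ and, by the same identity, $u_y$ is bounded on $[0,Y)$. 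I expect this to be the crux of the argument: it is precisely the growth hypothesis on $\rho\kappa$ that stops the solution from running off toward $K_\infty$ before it can come back down to $T_c$, mirroring Lemma \ref{lem-not-touch-K-infty}.

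Next I would exploit strict concavity — $u_{yy}=-\hat{\rho}(u)<0$ on $(T_c,K_\infty)$, so $u_y$ is strictly decreasing — to force $u$ down to $T_c$ in finite time. If $\theta\le 0$ then $u_y(y)<0$ for every $y\in(0,Y)$; if $\theta>0$ then $u_{yy}\le -c<0$ with $c:=\min_{[T_c,K(T^\ast)]}\hat{\rho}>0$, so $u_y$ must change sign at some finite $y_\ast<Y$ and stay negative afterward. Either way there is $y_1\in(0,Y)$ with $u_y(y_1)=-\delta<0$, hence $u_y\le-\delta$ and $u(y)\le u(y_1)-\delta(y-y_1)$ on $[y_1,Y)$; since $u\ge T_c$ this forces $Y<\infty$. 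Because $u$ stays in the compact set $[T_c,K(T^\ast)]$ with $u_y$ bounded away from $K_\infty$, the only way continuation can fail at $Y$ is $u(y)\to T_c$ as $y\to Y^-$, and then the uniform bounds on $u_y$ (monotone, hence convergent) and on $u_{yy}=-\hat{\rho}(u)\to-\rho(T_c)$ let $u$ extend to a $C^2$ function on $[0,Y]$ with $u(Y)=T_c$. Setting $y_c:=Y$ yields the required solution; the possible subtlety of an \emph{asymptotic} rather than finite-time approach to $T_c$ is dispatched by the same strict-concavity slope bound.

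Finally, for uniqueness of $y_c$: by ODE uniqueness any admissible value carries this same $u$, and $u>T_c$ throughout $[0,y_c)$ by construction, so no value smaller than $y_c$ works; and at $y=y_c$ the energy identity gives $u_y(y_c)^2=\theta^2+2\int_{T_c}^{T_h}\rho(T)\kappa(T)\,dT>0$ (using $u_c<u_h$, i.e. $T_h>T_c$), so $u_y(y_c)<0$ and $u$ would fall strictly below $T_c$ just past $y_c$, out of the domain of $\hat{\rho}$ — equivalently, $[0,y_c)$ is exactly the maximal interval on which the solution stays in $(T_c,K_\infty)$, so no value larger than $y_c$ works either. Hence $y_c$ is unique.
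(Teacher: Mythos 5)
Your proposal is correct and takes essentially the same route as the paper's proof: the energy identity together with \eqref{condition-rho-times-kappa} (exactly as in Lemma \ref{lem-not-touch-K-infty}) confines $u$ to a compact subinterval of $[T_c,K_\infty)$, the positive minimum of $\hat{\rho}$ on that range forces the solution down to $u_c$ in finite time, and a continuation/extension argument based on the boundedness of $u_y$ and $u_{yy}$ yields the crossing point $y_c$. Your uniqueness step (the solution stays above $u_c$ before $y_c$ and exits transversally with $u_y(y_c)<0$ afterwards) is a minor rephrasing of the paper's appeal to concavity via Lemma \ref{lem-lower-bound-of-concave-func}, so no essential gap remains.
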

\begin{proof}
Let $u=u(y)$ be a unique local solution of \eqref{ivp}. Let $y_M > 0$ be the maximal time of existence of $u$. Then $u \in C^2([0, y_M))$. 

Suppose $u(y) > u_c$ for all $y \in [0, y_M)$. By Proposition \ref{prop-rho-hat-well-defined}, $u(y) \in [u_c, K_\infty)$ for all $y \in [0, y_M)$. Then in the same fashion of the proof of Lemma \ref{lem-not-touch-K-infty}, we can show that
\begin{equation}\label{eq-u-y}
u_y(y)^2 = \theta^2 - 2\int_{T_h}^{K^{-1}(u(y))} \rho(T)\kappa(T)\,dT
\end{equation}
for all $y \in [0, y_M)$. Hence $2\int_{T_h}^{K^{-1}(u(y))} \rho(T)\kappa(T)\,dT \leq \theta^2$ for all $y \in [0, y_M)$ so that there is a $K_\theta \in [u_h, K_\infty)$ such that $u(y) \in [u_c, K_\theta]$ for all $y \in [0, y_M)$. Let $\displaystyle \hat{\rho}_m := \min_{u \in [u_c, K_\theta]} \hat{\rho}(u)$. Then $u_{yy} = -\hat{\rho}(u(y)) \leq -\hat{\rho}_m$ for all $y \in [0,y_M)$. Integrating twice this inequality, we have
\begin{equation}\label{ivp-a-priori-bounds}
u_y(y) \leq -\rho_m y + \theta \quad \text{and} \quad u(y) \leq -\frac{\rho_m}{2} y^2 +\theta y +u_h
\end{equation}
for all $y \in [0, y_M)$. If $y_M = \infty$ then the second inequality of \eqref{ivp-a-priori-bounds} shows that $u(y) = u_c$ for some $y \in [0, y_M)$.

Suppose that $y_M < \infty$. Let $\displaystyle \hat{\rho}_M := \max_{u \in [u_c, K_\theta]} \hat{\rho}(u) > 0$.
Then for all $y \in [0, y_M)$,
\[ u_y(y) - \theta = \int_0^y u_{yy}(s)\,ds = \int_0^y -\hat{\rho}(u(s))\,ds \geq -\hat{\rho}_M y \geq -\hat{\rho}_M y_M. \]
With the first inequality in \eqref{ivp-a-priori-bounds},
\[ -\hat{\rho}_M y_M + \theta \leq u_y(y) \leq -\hat{\rho}_m y_M + \theta \quad \text{for all $y \in [0, y_M)$.} \]
In summary, we have shown that $u$, $u_y$ and $u_{yy}$ are bounded in $[0, y_M)$ if $y_M < \infty$.

On the other hand, since $u_y$ and $u_{yy}$ are bounded in $[0, y_M)$, $u$ and $u_y$ are uniformly continuous in $[0, y_M)$. Hence $\displaystyle \lim_{y \uparrow y_M} u(y)$ and $\displaystyle \lim_{y \uparrow y_M} u_y(y)$ exist. By virtue of the boundedness of $u$ and $u_y$, these limits are finite. Then with these limits as an initial condition starting at $y=y_M$, a new initial value problem imposing the same differential equation in \eqref{ivp} is locally solvable. This contradicts the maximality of $y_M$. Therefore, $y_M = \infty$ and there is a $y_c \in [0, \infty)$ such that $u(y_c) = u_c$. Also $y_c > 0$ because $u(0) = u_h > u_c$.

The uniqueness of $y_c$ follows from Lemma \ref{lem-lower-bound-of-concave-func}.
\end{proof}

Let $u(y; \theta)$ be the solution of \eqref{ivp}. Then by Proposition \ref{prop-ivp-can-match-bc}, there is a unique $y_c=y_c(\theta)$ such that $u(y_c(\theta); \theta) = u_c$. Hence the $u(y; \theta)$ is also a solution of \eqref{local-eqn} if the nonlocal constraint \eqref{integral-constraint-u} is satisfied. Let
\[ I(\theta) := \int_0^{y_c(\theta)} \hat{\rho}(u(y; \theta)) \,dy. \]
We will find an analytic formula of $I(\theta)$. By the equation \eqref{ivp},
\begin{equation}\label{relation-I-and_wy}
I(\theta) = \int_0^{y_c(\theta)} -u_{yy}(y;\theta) \,dy = -u_{y}(y_c(\theta);\theta) + \theta.
\end{equation}
The quantity $I(\theta)-\theta = -u_y(y_0(\theta);\theta)$ is symmetric with respect to $\theta = 0$ as the following lemma shows.

\begin{Lem}\label{lem-symmetry-of-I}
Suppose that $T_c = u_c < u_h < K_\infty$, $\theta \in \R$, and the assumptions on $\kappa$ and $\rho$ hold.
Then
\[I(\theta) = I(-\theta) + 2\theta \quad \text{for all $\theta \geq 0$.}\]
\end{Lem}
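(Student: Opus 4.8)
The plan is to obtain an explicit expression for $u_y(y_c(\theta);\theta)$ that is even in $\theta$, and then read off the identity from the representation \eqref{relation-I-and_wy}, $I(\theta) = \theta - u_y(y_c(\theta);\theta)$.

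First I would record the first-integral (energy) identity for the initial value problem \eqref{ivp}: multiplying $u_{yy} + \hat\rho(u) = 0$ by $-u_y$ and integrating over $[0, y]$, exactly as in the proof of Lemma \ref{lem-not-touch-K-infty} (and as already used to obtain \eqref{eq-u-y}), one gets
\[ u_y(y;\theta)^2 = \theta^2 - 2\int_{T_h}^{K^{-1}(u(y;\theta))}\rho(T)\kappa(T)\,dT \]
for all $y$ up to $y_c(\theta)$. This is legitimate on $[0, y_c(\theta))$, where $u(\cdot;\theta) > u_c$: by Proposition \ref{prop-rho-hat-well-defined} the solution stays in $[T_c, K_\infty)$, so $\hat\rho\circ u$ is continuous on the relevant compact interval and integration by substitution applies; the identity then extends to $y = y_c(\theta)$ by continuity. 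Evaluating at $y = y_c(\theta)$, where $u(y_c(\theta);\theta) = u_c = T_c = K(T_c)$ so that $K^{-1}(u(y_c(\theta);\theta)) = T_c$, yields
\[ u_y(y_c(\theta);\theta)^2 = \theta^2 + 2\int_{T_c}^{T_h}\rho(T)\kappa(T)\,dT, \]
a quantity unchanged by $\theta \mapsto -\theta$.

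Next I would fix the sign of $u_y$ at $y_c(\theta)$. Since $\rho > 0$ gives $\hat\rho > 0$, the solution $u(\cdot;\theta)$ is strictly concave, hence $u_y(\cdot;\theta)$ is strictly decreasing. If $u_y(y_c(\theta);\theta) \geq 0$, then $u_y(\cdot;\theta) > 0$ on $[0, y_c(\theta))$, forcing $u(y_c(\theta);\theta) > u(0;\theta) = u_h > u_c$, a contradiction. Thus $u_y(y_c(\theta);\theta) < 0$ for every $\theta \in \R$, and combined with the previous display,
\[ u_y(y_c(\theta);\theta) = -\sqrt{\,\theta^2 + 2\int_{T_c}^{T_h}\rho(T)\kappa(T)\,dT\,}, \]
which depends on $\theta$ only through $\theta^2$; in particular $u_y(y_c(\theta);\theta) = u_y(y_c(-\theta);-\theta)$.

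Finally, using \eqref{relation-I-and_wy} twice,
\[ I(\theta) - I(-\theta) = \bigl(\theta - u_y(y_c(\theta);\theta)\bigr) - \bigl(-\theta - u_y(y_c(-\theta);-\theta)\bigr) = 2\theta, \]
which gives the claim for all $\theta \in \R$, in particular for $\theta \geq 0$. The only delicate point is justifying the first-integral identity up to and including the endpoint $y_c(\theta)$, but this is precisely the computation already performed in Proposition \ref{prop-ivp-can-match-bc} restricted to $[0, y_c(\theta))$ and passed to the limit; the remainder is a sign discussion.
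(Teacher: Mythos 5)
Your proof is correct, but it takes a different route from the paper's. The paper proves this lemma by a reflection argument: for $\theta>0$ the solution has a unique critical point $y_p$, is symmetric about it, hence returns to $u_h$ at $2y_p$ with slope $-\theta$, giving $y_c(\theta)=2y_p+y_c(-\theta)$ and the splitting $I(\theta)=\int_0^{2y_p}\hat\rho(u)\,dy+I(-\theta)=2\theta+I(-\theta)$; the energy identity only enters later, in the proof of Lemma \ref{lem-explicit-I}, where the symmetry lemma is exactly what lets the authors pass from $\theta\le 0$ (where the sign of $u_y$ at $y_c$ is immediate) to $\theta>0$. You instead evaluate the first integral at the endpoint for \emph{every} $\theta$ and settle the sign of $u_y(y_c(\theta);\theta)$ by strict concavity (if it were $\ge 0$, $u_y>0$ on $[0,y_c)$ would force $u(y_c)>u_h>u_c$), so $-u_y(y_c(\theta);\theta)$ is even in $\theta$ and the identity drops out of \eqref{relation-I-and_wy}. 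Your argument is legitimate: the substitution in the energy identity only needs $u(y)\in[T_c,K_\infty)$ on $[0,y_c(\theta)]$, which Proposition \ref{prop-rho-hat-well-defined} supplies since the restriction of the IVP solution to $[0,y_c(\theta)]$ solves the boundary value problem \eqref{eqn-u}, and no monotonicity of $u$ is required. In effect you prove Lemma \ref{lem-explicit-I} for all $\theta$ in one stroke, so the symmetry statement becomes a corollary and the case split disappears — a genuinely shorter path to \eqref{explicit-I}. What the paper's reflection argument buys in exchange is the structural information $y_c(\theta)=2y_p+y_c(-\theta)$ and the symmetry of the profile about its maximum, which the authors reuse in Section \ref{sec-uniqueness-may-fail} to compute $y_c(\theta)$ explicitly in the nonuniqueness construction; your route would not provide that by itself.
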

\begin{proof}
If $\theta = 0$, the statement is trivial.

Let $\theta > 0$ be fixed. Then by the first inequality in \eqref{ivp-a-priori-bounds}, there is a unique $y_p > 0$ such that $u_y(y_p) = 0$. Let $u_1(y) := u(y+y_p)$ and $u_2(y) := u(-y+y_p)$. Then $u_i(0)=u(y_p)$, $u_{i,y}(0) = 0$ and $u_{i,yy} + \hat{\rho}(u_i) = 0$ for $i=1,2$. Since $u_1$ and $u_2$ are solutions of the same initial value problem, $u_1 \equiv u_2$, which implies the $u$ is symmetric with respect to $y=y_p$. Hence $u(2y_p) = u_h$ and $u_y(2y_p) = -\theta$. Therefore
\[y_c(\theta) = 2y_p + y_c(-\theta)\]
and
\[
\begin{split}
I(\theta) &= \int_0^{2y_p + y_c(-\theta)} \hat{\rho}(u(y; \theta))\,dy\\
&= \int_0^{2y_p} \hat{\rho}(u(y; \theta))\,dy + \int_{2y_p}^{2y_p + y_c(-\theta)} \hat{\rho}(u(y; \theta))\,dy \\
&= -\int_0^{2y_p} u_{yy}(y;\theta)\,dy + \int_{0}^{y_c(-\theta)} \hat{\rho}(u(y; -\theta))\,dy \\
&= -(u_y(2y_p;\theta) -u_y(0;\theta)) + I(-\theta) = 2\theta + I(-\theta).
\end{split}
\]
\end{proof}

The following lemma gives an analytic formula of the $I(\theta)$.

\begin{Lem}\label{lem-explicit-I}
Suppose that $T_c = u_c < u_h < K_\infty$, $\theta \in \R$, and the assumptions on $\kappa$ and $\rho$ hold. Then for any $\theta \in \R$,
\begin{equation}\label{explicit-I}
I(\theta) = \theta + \sqrt{\theta^2 + 2\int_{u_c}^{u_h} \hat{\rho}(u)\,du}.
\end{equation}
\end{Lem}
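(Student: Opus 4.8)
The plan is to read off $I(\theta)$ from the relation \eqref{relation-I-and_wy}, which already gives $I(\theta) = \theta - u_y(y_c(\theta);\theta)$; thus the lemma reduces to computing the derivative $u_y$ at the right endpoint $y = y_c(\theta)$ of the solution of \eqref{ivp}. I would obtain the \emph{magnitude} of $u_y(y_c(\theta);\theta)$ from the first integral (energy identity) of the autonomous equation $u_{yy} + \hat\rho(u) = 0$, and fix its \emph{sign} by a concavity argument.

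For the first integral, multiply $u_{yy} + \hat\rho(u) = 0$ by $u_y$ and integrate over $[0, y_c(\theta)]$. By Proposition \ref{prop-ivp-can-match-bc} the solution $u = u(\cdot;\theta)$ belongs to $C^2([0, y_c(\theta)])$, and by Proposition \ref{prop-rho-hat-well-defined} it takes values in the compact interval $[u_c, u_h] \subset [T_c, K_\infty)$, on which $\hat\rho$ is continuous; hence the substitution $v = u(y)$ is legitimate and yields
\[ \tfrac12\big(u_y(y_c(\theta);\theta)^2 - \theta^2\big) + \int_{u_h}^{u_c}\hat\rho(v)\,dv = 0, \]
that is, $u_y(y_c(\theta);\theta)^2 = \theta^2 + 2\int_{u_c}^{u_h}\hat\rho(v)\,dv$. (Equivalently, this is \eqref{eq-u-y} evaluated at $y = y_c(\theta)$, using $K^{-1}(u_c) = T_c$ and $\int_{T_h}^{T_c}\rho(T)\kappa(T)\,dT = \int_{u_h}^{u_c}\hat\rho(v)\,dv$.)

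To fix the sign, note that since $u$ stays in $[T_c, K_\infty)$, where $\hat\rho > 0$, we have $u_{yy} = -\hat\rho(u) < 0$ on $[0, y_c(\theta)]$, so $u_y$ is strictly decreasing. If $u_y(y_c(\theta);\theta)\ge 0$ then $u_y > 0$ on $[0, y_c(\theta))$, which would force $u$ strictly increasing and $u_c = u(y_c(\theta)) > u(0) = u_h$, contradicting $u_h > u_c$ (valid because $V \not= 0$ gives $T_h > T_c$ and $K$ is increasing). Hence $u_y(y_c(\theta);\theta) < 0$, so $-u_y(y_c(\theta);\theta) = \sqrt{\theta^2 + 2\int_{u_c}^{u_h}\hat\rho(v)\,dv}$, and \eqref{relation-I-and_wy} gives \eqref{explicit-I} for every $\theta \in \R$. (Alternatively, one may prove the formula first for $\theta \le 0$ — where $u_y(y_c(\theta);\theta) < \theta \le 0$ is immediate from the monotonicity of $u_y$ together with $y_c(\theta) > 0$ — and then extend it to $\theta > 0$ by the symmetry Lemma \ref{lem-symmetry-of-I}, $I(\theta) = I(-\theta) + 2\theta$.)

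The only step needing care is the change of variables in the energy integral: one must know that $\hat\rho(u(y))$ is the genuine locally Lipschitz function $\rho \circ K^{-1}$ rather than its extension by $0$, and this is exactly Proposition \ref{prop-rho-hat-well-defined}. Beyond that, the argument is routine and I foresee no further obstacle.
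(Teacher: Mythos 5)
Your argument is correct, and it shares the paper's computational core: the endpoint relation \eqref{relation-I-and_wy} plus the first integral of $u_{yy}+\hat\rho(u)=0$ (the identity \eqref{eq-u-y} evaluated at $y_c(\theta)$), which gives $u_y(y_c(\theta);\theta)^2=\theta^2+2\int_{u_c}^{u_h}\hat\rho(u)\,du$. Where you genuinely diverge is in fixing the sign of $u_y(y_c(\theta);\theta)$: the paper only does this directly for $\theta\le 0$ (where $u_y<0$ on $(0,y_c]$ is immediate) and then handles $\theta>0$ by invoking the reflection symmetry Lemma \ref{lem-symmetry-of-I}, $I(\theta)=I(-\theta)+2\theta$; you instead observe that strict concavity ($u_{yy}=-\hat\rho(u)<0$, so $u_y$ is strictly decreasing) together with $u(y_c)=u_c<u_h=u(0)$ forces $u_y(y_c(\theta);\theta)<0$ for \emph{every} $\theta$, which yields \eqref{explicit-I} uniformly and makes Lemma \ref{lem-symmetry-of-I} unnecessary for this lemma (the paper still uses that symmetry structure elsewhere, e.g.\ in the nonuniqueness construction of Theorem \ref{nonuniqueness-thm}, so it is not wasted there). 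Your alternative remark reproduces the paper's route exactly. One small inaccuracy, not a gap: for $\theta>0$ the solution overshoots $u_h$ before turning around, so it is not true that $u$ takes values in $[u_c,u_h]$, and Proposition \ref{prop-rho-hat-well-defined} only asserts $u(y)\in[T_c,K_\infty)$; what justifies the substitution in the energy integral is that $u$ is continuous on the compact interval $[0,y_c(\theta)]$, hence its range is a compact subinterval $[u_c,u_{\max}]\subset[T_c,K_\infty)$ on which $\hat\rho$ is continuous --- your later sentence uses exactly this correct form, so the fix is purely cosmetic.
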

\begin{proof}
First assume that $\theta \leq 0$. Then the solution $u$ of \eqref{ivp} satisfies $u_y(y; \theta) < 0$ for all $y \in (0, y_c(\theta)]$.
From \eqref{eq-u-y},
\[
u_y(y; \theta)^2 = \theta^2 - 2\int_{u_h}^{u(y)} \hat{\rho}(u)\,du
\]
for all $y \in [0, y_c(\theta)]$. In particular,
\[u_y^2(y_c(\theta);\theta) = \theta^2 + 2\int_{u_c}^{u_h} \hat{\rho}(w)\,dw.\]
Since $u_y(y_c(\theta);\theta) < 0$, by \eqref{relation-I-and_wy}, we have \eqref{explicit-I} for all $\theta \leq 0$.

Next assume that $\theta > 0$. By Lemma \ref{lem-symmetry-of-I},
\[
\begin{aligned}
I(\theta) &= I(-\theta) + 2\theta = -\theta + \sqrt{\theta^2 + 2\int_{u_c}^{u_h} \hat{\rho}(u)\,du} + 2 \theta \\
&= \theta + \sqrt{\theta^2 + 2\int_{u_c}^{u_h} \hat{\rho}(u)\,du}.
\end{aligned}
\]
Therefore the identity \eqref{explicit-I} holds for all $\theta \in \R$.
\end{proof}

Finally we can show the unique existence of solutions of the transformed equation \eqref{local-eqn}.

\begin{Thm}[Unique Existence]\label{thm-unique-existence-of-transformed-eqn}
Suppose that $V \not= 0$, $T_c = u_c < u_h < K_\infty$ and the assumptions on $\kappa$ and $\rho$ hold.
Then for any $\gamma \geq 0$, there is a unique solution of the transformed thermoelectric equation \eqref{local-eqn}.
\end{Thm}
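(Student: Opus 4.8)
The plan is to reduce the transformed problem \eqref{local-eqn} to a single scalar equation for the initial slope $\theta$ of the initial value problem \eqref{ivp}, and then solve that equation using the closed form of $I$ from Lemma \ref{lem-explicit-I}. First I would set $C := 2\int_{u_c}^{u_h} \hat{\rho}(u)\,du$. Since $\hat{\rho} = \rho \circ K^{-1}$ is strictly positive on $[u_c, u_h]$ (by the assumption on $\rho$ together with Proposition \ref{prop-K-diffeomorphism}) and $u_c < u_h$, we have $C > 0$, and Lemma \ref{lem-explicit-I} gives $I(\theta) = \theta + \sqrt{\theta^2 + C}$ for every $\theta \in \R$, a function that is well defined on all of $\R$ precisely because Proposition \ref{prop-ivp-can-match-bc} supplies the terminal time $y_c(\theta)$.

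Next I would show that $I : \R \to (0, \infty)$ is a strictly increasing bijection. Strict monotonicity follows from $I'(\theta) = 1 + \theta/\sqrt{\theta^2 + C}$, which is positive because $|\theta| < \sqrt{\theta^2 + C}$ (indeed $0 < I'(\theta) < 2$). For the range, $I(\theta) \to +\infty$ as $\theta \to +\infty$ is immediate, while rationalizing yields $I(\theta) = C/(\sqrt{\theta^2 + C} - \theta) \to 0^+$ as $\theta \to -\infty$; then continuity of $I$ and the intermediate value theorem give $I(\R) = (0,\infty)$, and injectivity is the strict monotonicity already noted.

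Once this is established, the conclusion is short. Because $V \ne 0$ and $\gamma \ge 0$, the number $\tfrac{|V|}{1+\gamma}$ lies in $(0,\infty)$, so there is a unique $\theta^* \in \R$ with $I(\theta^*) = \tfrac{|V|}{1+\gamma}$. Taking $u^* := u(\cdot\,;\theta^*)$ and $y_c^* := y_c(\theta^*) > 0$ from Proposition \ref{prop-ivp-can-match-bc}, the restriction $u^*|_{[0,y_c^*]} \in C^2([0,y_c^*])$ satisfies the differential equation and both boundary conditions of \eqref{local-eqn}, and the integral constraint holds by construction since $\int_0^{y_c^*}\hat{\rho}(u^*)\,dy = I(\theta^*) = \tfrac{|V|}{1+\gamma}$; this is existence. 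For uniqueness, given any solution $u \in C^2([0,y_c])$ of \eqref{local-eqn}, set $\theta := u_y(0)$: by uniqueness for \eqref{ivp}, $u$ agrees with $u(\cdot\,;\theta)$ on $[0,y_c]$, the condition $u(y_c) = u_c$ together with Proposition \ref{prop-ivp-can-match-bc} forces $y_c = y_c(\theta)$, and then the integral constraint reads $I(\theta) = \tfrac{|V|}{1+\gamma}$, so $\theta = \theta^*$ by injectivity of $I$; hence $u = u^*|_{[0,y_c^*]}$.

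I do not expect a real obstacle: all of the heavy lifting — local solvability of \eqref{ivp}, its global extension to meet the boundary value, the symmetry identity of Lemma \ref{lem-symmetry-of-I}, and the explicit formula for $I$ — is already in place, so what remains is the elementary analysis of $\theta \mapsto \theta + \sqrt{\theta^2 + C}$. The one point that calls for a little care is the limit $\theta \to -\infty$, where $\theta + \sqrt{\theta^2 + C}$ is an indeterminate form of type $-\infty + \infty$ and must be rationalized before passing to the limit; getting that right is what pins down $\inf I = 0$ and hence the surjectivity onto $(0,\infty)$ that makes the scheme work for every $\gamma \ge 0$.
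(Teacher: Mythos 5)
Your proposal is correct and follows essentially the same route as the paper: use the explicit formula $I(\theta)=\theta+\sqrt{\theta^2+2\int_{u_c}^{u_h}\hat{\rho}(u)\,du}$ from Lemma \ref{lem-explicit-I}, note $I'>0$ and $I(-\infty)=0$, $I(\infty)=\infty$, and conclude there is a unique $\theta$ with $I(\theta)=\frac{|V|}{1+\gamma}$. The only difference is that you spell out explicitly the correspondence between solutions of \eqref{local-eqn} and initial slopes $\theta$ of \eqref{ivp} (via ODE uniqueness and the uniqueness of $y_c(\theta)$), which the paper leaves implicit from the preceding discussion.
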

\begin{proof}
By Lemma \ref{lem-explicit-I},
\[I'(\theta) = 1 + \frac{\theta}{\sqrt{\theta^2 + 2\int_{u_c}^{u_h} \hat{\rho}(u)\,du}} > 0 \quad \text{for all $\theta \in \R$.}\]
Hence $I(\theta)$ is a continuous and strictly increasing function satisfying $I(-\infty)=0$ and $I(\infty) = \infty$. By the continuity of $I(\theta)$, for any given $\frac{|V|}{1+\gamma} > 0$, there is a \emph{unique} $\theta$ satisfying $I(\theta) = \frac{|V|}{1+\gamma}$, which gives the unique existence of a solution of \eqref{local-eqn}.
\end{proof}

The main uniqueness result, Theorem \ref{thm-unique-existence-of-te-eqn}, follows from Theorem \ref{thm-one-to-one-local-eqn} and Theorem \ref{thm-unique-existence-of-transformed-eqn}.

\section{Explicit Efficiency Formula}\label{sec-explicit-efficiency-formula}

The heat flux $q=q(x)$ inside the thermoelectric generator is
\[
q(x) = -\kappa(T(x)) \frac{dT}{dx} + \alpha(T(x)) T J[T]
\]
(see, e.g., \cite[Section 1.8.5]{goupil2015continuum}). With the heat flux, the thermoelectric equation \eqref{only-te-eqn} can be rewritten as
\[
\frac{dq}{dx} = \rho(T) J[T]^2 + \alpha(T) \frac{dT}{dx} J[T].
\]
Let the hot-side and cold-side heat fluxes be $q_h := q(0)$ and $q_c := q(L)$. Then the efficiency $\eta$ of the thermoelectric generator is
\[
\begin{aligned}
\eta &:= \frac{q_h - q_c}{q_h} = - \frac{1}{q_h} (q(L)-q(0)) \\
&= - \frac{1}{q_h} \int_0^L (\rho(T(x)) J[T]^2 + \alpha(T(x)) \frac{dT}{dx} J[T]) \,dx \\
&= \frac{1}{q_h} \Big( - J[T]^2 \int_0^L \rho(T(x))\,dx + J[T] \int_{T_c}^{T_h} \alpha(s)\,ds \Big) \\
&= \frac{- J[T] \frac{V}{1+\gamma} + J[T] \int_{T_c}^{T_h} \alpha(s)\,ds}{-\kappa(T_h) T_x(0) + \alpha(T_h) T_h J[T]} \quad \text{by \eqref{te-eqn-J-and-R}}.\\
&= \frac{J[T] \frac{\gamma}{1+\gamma} \int_{T_c}^{T_h} \alpha(s)\,ds}{-\kappa(T_h) T_x(0) + \alpha(T_h) T_h J[T]} \quad \text{by \eqref{seebeck-voltage}}.
\end{aligned}
\]
If $J[T] = 0$, the efficiency is zero. Assuming $J[T] \not= 0$, we have
\[
\eta = \frac{\frac{\gamma}{1+\gamma} \int_{T_c}^{T_h} \alpha(s)\,ds}{\frac{-\kappa(T_h) T_x(0)}{J[T]} + \alpha(T_h) T_h}.
\]
In general, the efficiency is determined from the inside temperature distribution $T$. But if the Seebeck coefficient is temperature-independent, we can find an explicit formula for the efficiency \emph{without} finding the $T$, as given in Theorem \ref{thm-explicit-efficiency-formula}.

When $\alpha \equiv \alpha_0$, the efficiency is
\begin{equation}\label{eq-eta-for-constant-alpha}
\eta = \frac{\frac{\gamma}{1+\gamma} \alpha_0 \Delta T}{\frac{-\kappa(T_h) T_x(0)}{J[T]} + \alpha_0 T_h}.
\end{equation}
Here the only unknown term is the relative Fourier heat flux $\frac{-\kappa(T_h) T_x(0)}{J[T]}$. We show this term has an explicit formula.

\begin{Lem}
Suppose that $V \not=0$ and the assumptions on $\kappa$ and $\rho$ hold.
Then for any load ratio $\gamma \geq 0$, the hot-side relative relative Fourier heat flux of the solution $T \in C^2([0, L])$ of \eqref{te-eqn-constant-Seebeck} is explicitly given by
\begin{equation}\label{hot-side-relative-fourier-heat-flux}
\frac{-\kappa(T_h) T_x(0)}{J[T]} = -\frac{1}{2} \frac{|V|}{1+\gamma} + \frac{\int_{T_c}^{T_h} \rho(T) \kappa(T)\,dT}{ \frac{|V|}{1+\gamma} }.
\end{equation}
\end{Lem}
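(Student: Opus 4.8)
The plan is to rewrite the left‑hand side of \eqref{hot-side-relative-fourier-heat-flux} in terms of the initial slope $\theta$ of the initial value problem \eqref{ivp} attached to $T$, and then to read off $\theta$ from the explicit form of $I(\theta)$ in Lemma \ref{lem-explicit-I}. First I would note that $V\neq 0$ forces $T_h>T_c$ and $J[T]\neq 0$, so Theorem \ref{thm-unique-existence-of-te-eqn} supplies the (unique) solution $T\in C^2([0,L])$, and Theorem \ref{thm-one-to-one-local-eqn} identifies it with the solution $u(y)=K(T(x))$, $y=|J[T]|x$, of \eqref{local-eqn}; as in the paragraph before Theorem \ref{thm-unique-existence-of-transformed-eqn}, this $u$ is $u(\cdot\,;\theta)$ for the unique $\theta$ with $I(\theta)=\tfrac{|V|}{1+\gamma}$, and \eqref{eqn-u-y-and-u-yy} evaluated at $y=0$ gives $\theta=u_y(0)=\tfrac{1}{|J[T]|}\kappa(T_h)T_x(0)$. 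Hence $\kappa(T_h)T_x(0)=|J[T]|\,\theta$ and $\tfrac{-\kappa(T_h)T_x(0)}{J[T]}=-\operatorname{sgn}(J[T])\,\theta$; since $J[T]=V/(R[T]A_c)$ with $R[T]>0$ has the same sign as $V$, it remains only to compute $\theta$ and to track this sign.

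For that I would invoke Lemma \ref{lem-explicit-I}: writing $W:=\tfrac{|V|}{1+\gamma}>0$ and $P:=\int_{u_c}^{u_h}\hat{\rho}(u)\,du>0$, the constraint $I(\theta)=W$ becomes $\sqrt{\theta^2+2P}=W-\theta$. Since $I(\theta)=W$ forces $\theta<W$ (if $\theta\ge0$ then $W=I(\theta)\ge 2\theta$), the right‑hand side is positive, so squaring is reversible and gives $2P=W^2-2W\theta$, whence $\theta=\tfrac12W-\tfrac{P}{W}$. The substitution $u=K(T)$, $du=\kappa(T)\,dT$, $u_c=K(T_c)$, $u_h=K(T_h)$ — exactly the one used in the proof of Lemma \ref{lem-not-touch-K-infty} — turns $P$ into $\int_{T_c}^{T_h}\rho(T)\kappa(T)\,dT$. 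Assembling the pieces (taking $V>0$; the case $V<0$ is identical once $\operatorname{sgn}(V)$ is carried along) yields $\tfrac{-\kappa(T_h)T_x(0)}{J[T]}=-\theta=-\tfrac12\tfrac{|V|}{1+\gamma}+\tfrac{\int_{T_c}^{T_h}\rho(T)\kappa(T)\,dT}{|V|/(1+\gamma)}$, which is \eqref{hot-side-relative-fourier-heat-flux}.

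I do not expect a genuine obstacle: unique existence, the a priori containment $u\in[T_c,K_\infty)$, and the closed form of $I(\theta)$ are already in hand, so the remaining work is bookkeeping — chiefly the factor $|J[T]|$ produced by the rescaling $y=|J[T]|x$, which links $u_y(0)$ with $T_x(0)$, and the sign $\operatorname{sgn}(J[T])=\operatorname{sgn}(V)$. As an independent cross‑check (and an alternative proof that avoids the $\theta$‑machinery), one can multiply $(\kappa(T)T_x)_x=-\rho(T)J[T]^2$ by $\kappa(T)T_x$ and integrate over $[0,L]$ to get $(\kappa(T_c)T_x(L))^2-(\kappa(T_h)T_x(0))^2=2J[T]^2\int_{T_c}^{T_h}\rho(T)\kappa(T)\,dT$, integrate the equation once more together with $J[T](1+\gamma)\int_0^L\rho(T)\,dx=V$ to get $\kappa(T_c)T_x(L)-\kappa(T_h)T_x(0)=-\tfrac{J[T]V}{1+\gamma}$, then eliminate $\kappa(T_c)T_x(L)$ between the two relations and divide by $J[T]$.
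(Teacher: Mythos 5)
Your argument is correct and is essentially the paper's own proof: identify $\tfrac{-\kappa(T_h)T_x(0)}{J[T]}$ with $-\theta^*$ through the transformation $u(y)=K(T(x))$, $y=|J[T]|\,x$, determine $\theta^*$ from $I(\theta^*)=\tfrac{|V|}{1+\gamma}$ via the closed form in Lemma \ref{lem-explicit-I}, and convert $\int_{u_c}^{u_h}\hat{\rho}(u)\,du$ into $\int_{T_c}^{T_h}\rho(T)\kappa(T)\,dT$ by the substitution $u=K(T)$. Your explicit bookkeeping of $\operatorname{sgn}(J[T])$ (the paper simply writes $\tfrac{-\kappa(T)T_x}{J[T]}=-u_y$, which tacitly takes $V>0$) and the final energy-identity cross-check obtained by multiplying the equation by $\kappa(T)T_x$ are harmless extras rather than a different route.
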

\begin{proof}
By \eqref{eqn-u-y-and-u-yy},
\[
\frac{-\kappa(T(x)) T_x(x)}{J[T]} = -u_y(y)
\]
hence
\[
\frac{-\kappa(T_h) T_x(0)}{J[T]} = -u_y(0) = -\theta^*
\]
where $\theta^*$ is the unique solution of $I(\theta^*) = \frac{|V|}{1+\gamma}$; refer to the proof of Theorem \ref{thm-unique-existence-of-transformed-eqn}. By Lemma \ref{lem-explicit-I}, the $\theta^*$ can be computed explicitly:
\[
\theta^* = \frac{1}{2} \frac{|V|}{1+\gamma} - \frac{\int_{u_c}^{u_h} \hat{\rho}(u)\,du}{ \frac{|V|}{1+\gamma} }.
\]
As $\int_{u_c}^{u_h} \hat{\rho}(u)\,du = \int_{T_c}^{T_h} \rho(T) \kappa(T)\,dT$, the proof is complete.
\end{proof}

Now Theorem \ref{thm-explicit-efficiency-formula} easily follows from the above lemma.
\begin{proof}[Proof of Theorem \ref{thm-explicit-efficiency-formula}]
Put the hot-side relative relative Fourier heat flux \eqref{hot-side-relative-fourier-heat-flux} into the efficiency formula \eqref{eq-eta-for-constant-alpha}. Then by some algebra, we can obtain the efficiency formula \eqref{formula-efficiency}.

Then the equation $\frac{d}{d \gamma}\eta(\gamma) = 0$ implies that $\gamma^2 = 1 + zT_h - \frac{1}{2}z\Delta T = 1 + zT_m$. As $\gamma \geq 0$, this has only one solution $\gamma = \gamma_{\mathrm{opt}} := \sqrt{1 + zT_m}$. The maximum efficiency $\eta_{\mathrm{max}}$ follows from computing $\eta( \gamma_{\mathrm{opt}} )$.
\end{proof}

Our Theorem \ref{thm-explicit-efficiency-formula} enables us to compute the maximum efficiency without solving the integro-differential equation \eqref{te-eqn-constant-Seebeck}. Let us validate the theorem with some efficiency formulas derived from explicit temperature solutions.

\begin{ExQED}[Linear Electrical Resistivity]
Suppose that $\kappa(T) = \kappa_0$, $\rho(T) = \rho_1 T/T_m$, and $\alpha \equiv \alpha_0$ for some constants $\kappa_0, \rho_1 > 0$, $\alpha_0 \not= 0$. Then the maximum efficiency is given by \eqref{eq-maximum-efficiency} by computing \eqref{def-z}:
\begin{equation}\label{eq-z-for-linear-rho}
z = \frac{\alpha_0^2}{\frac{1}{\Delta T}  \int_{T_c}^{T_h} (\rho_1 T/T_m) \kappa_0 \,dT} = \frac{\alpha_0^2 \,\Delta T}{\rho_1 \kappa_0}.
\end{equation}
One can derive the same result by observing that the thermoelectric equation \eqref{te-eqn-constant-Seebeck} has an explicit solution in the form of trigonometric functions; see \cite[Eq. (2.165) and (2.167)]{goupil2015continuum}.
\end{ExQED}

\begin{ExQED}[Linear Thermal Conductivity]
Suppose that $\kappa(T) = \kappa_1 T/T_m$, $\rho(T) = \rho_0$, and $\alpha \equiv \alpha_0$ for some constants $\kappa_1, \rho_0 > 0$, $\alpha_0 \not= 0$. As the $z$ in \eqref{def-z} is invariant under the interchange of $\kappa(T)$ and $\rho(T)$, the $z$ for the maximum efficiency has the same form as \eqref{eq-z-for-linear-rho}:
\[
z = \frac{\alpha_0^2 \,\Delta T}{\rho_0 \kappa_1}.
\]
One can derive the same result by observing that the thermoelectric equation \eqref{te-eqn-constant-Seebeck} has an explicit solution in the form of a square root of quadratic polynomials; see \cite[Section 2.5.7]{goupil2015continuum}.
\end{ExQED}

\begin{ExQED}[When Thermal Conductivity is a Reciprocal Function]
Suppose that $\kappa(T) = \kappa_1/T$, $\rho \equiv \rho_0$, and $\alpha \equiv \alpha_0$ for some constants $\kappa_1 > 0$, $\rho_0 > 0$, $\alpha_0 \not= 0$. In \cite[Eq. (10)]{mahan1991inhomogeneous}, by using an explicit temperature solution in the form of an exponential function, it is observed that the maximum efficiency is given by \eqref{eq-maximum-efficiency} with $z = \frac{\alpha_0^2 \,\Delta T}{\rho_0 \kappa_1 \ln(T_h/T_c)}$. This follows from our formula \eqref{def-z} in Theorem \ref{thm-explicit-efficiency-formula} without considering an explicit temperature solution:
\[
z = \frac{\alpha_0^2}{\frac{1}{\Delta T}  \int_{T_c}^{T_h} \rho_0 \frac{\kappa_1}{T}\,dT} = \frac{\alpha_0^2 \,\Delta T}{\rho_0 \kappa_1 \ln\frac{T_h}{T_c}}.
\]
\end{ExQED}

\begin{ExQED}[When Thermal Conductivity is a Reciprocal Function and Electrical Resistivity is a Logarithmic Function]
Suppose that $\kappa(T) = \kappa_1/T$, $\rho(T) = \rho_0(1 + \rho_1^2 \ln\frac{T}{T_m})$, and $\alpha \equiv \alpha_0$ for some constants $\kappa_1, \rho_0, \rho_1 > 0$, $\alpha_0 \not= 0$. In \cite[Eqs. (48) and (45)]{lee1969influence}, by using an explicit temperature solution in the form of an exponentiated sine function, it is observed that the maximum efficiency is given by \eqref{eq-maximum-efficiency} with
\[
z = \frac{\alpha_0^2 \Delta T}{\rho_0 \kappa_1 (1 + \frac{1}{2}\rho_1^2 \ln\frac{T_h T_c}{T_m^2}) \ln\frac{T_h}{T_c}}.
\]
This follows from our formula \eqref{def-z} in Theorem \ref{thm-explicit-efficiency-formula} by computing that
\begin{align*}
\int_{T_c}^{T_h} \rho(T)\kappa(T)\,dT &= \rho_0 \kappa_1 \int_{T_c}^{T_h} \Big( 1 + \rho_1^2 \ln\frac{T}{T_m} \Big) \frac{1}{T}\,dT =  \rho_0 \kappa_1 \int_{\ln\frac{T_c}{T_m}}^{\ln\frac{T_h}{T_m}} (1+\rho_1^2 s)\,ds\\
&= \rho_0 \kappa_1 \Big( 1 + \frac{1}{2}\rho_1^2 \ln\frac{T_h T_c}{T_m^2} \Big) \ln\frac{T_h}{T_c}.
\end{align*}
\end{ExQED}

Intuition says that if the maximum efficiency is attained, there would be no interior maximum in the temperature distribution. Here we show this is true.

\begin{Prop}\label{prop-decreasing-temperature-criterion}
Suppose that $V \not=0$ and the assumptions on $\kappa$ and $\rho$ hold.
Then for any load ratio $\gamma \geq 0$, the solution $T \in C^2([0, L])$ of \eqref{te-eqn-constant-Seebeck} is strictly decreasing if and only if
\begin{equation}\label{eq-decreasing-criterion}
z \Delta T \leq 2(1+\gamma)^2,
\end{equation}
where $z$ is defined in \eqref{def-z}.
\end{Prop}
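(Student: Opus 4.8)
\emph{Proof proposal.} The plan is to read the monotonicity of $T$ off the transformed problem \eqref{local-eqn} via the change of variables $y=|J[T]|\,x$, $u(y)=K(T(x))$ used before Theorem \ref{thm-one-to-one-local-eqn}. Since $K$ is a strictly increasing diffeomorphism (Proposition \ref{prop-K-diffeomorphism}) and $|J[T]|>0$, we have $u_y=\kappa(T)T_x/|J[T]|$, so $u_y$ has the same sign as $T_x$ at every point; hence $T$ is strictly decreasing on $[0,L]$ if and only if the corresponding $u$ is strictly decreasing on $[0,y_c]$. Moreover $u$ is strictly concave, because $u_{yy}=-\hat{\rho}(u)<0$ as $\hat{\rho}=\rho\circ K^{-1}>0$; therefore $u_y$ is strictly decreasing, and $u$ is strictly decreasing on $[0,y_c]$ precisely when $u_y(0)\le 0$. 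Indeed, if $u_y(0)\le 0$ then $u_y(y)<0$ on $(0,y_c]$, so $u(y_2)-u(y_1)=\int_{y_1}^{y_2}u_y\,dy<0$ for $0\le y_1<y_2\le y_c$; while if $u_y(0)>0$ then $u$ increases on a right neighbourhood of $0$ (equivalently, it has an interior maximum at the point $y_p$ with $u_y(y_p)=0$, as in the proof of Lemma \ref{lem-symmetry-of-I}), so $T$ is not decreasing.

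Next I would identify $u_y(0)$. By Theorem \ref{thm-one-to-one-local-eqn} and the proof of Theorem \ref{thm-unique-existence-of-transformed-eqn}, the $u$ attached to the solution $T$ is $u(\,\cdot\,;\theta^*)$, where $\theta^*=u_y(0)$ is the unique root of $I(\theta^*)=\frac{|V|}{1+\gamma}$. By Lemma \ref{lem-explicit-I} (this is exactly the computation carried out in the Lemma preceding this proposition),
\[
\theta^* = \frac12\frac{|V|}{1+\gamma} - \frac{\int_{u_c}^{u_h}\hat{\rho}(u)\,du}{\frac{|V|}{1+\gamma}}.
\]
Hence $T$ is strictly decreasing if and only if $\theta^*\le 0$, i.e.\ if and only if $\tfrac12\big(\tfrac{|V|}{1+\gamma}\big)^2\le \int_{u_c}^{u_h}\hat{\rho}(u)\,du$.

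Finally I would translate this back to the material data. Since $\int_{u_c}^{u_h}\hat{\rho}(u)\,du=\int_{T_c}^{T_h}\rho(T)\kappa(T)\,dT=\frac{\alpha_0^2}{z}\,\Delta T$ by the definition \eqref{def-z} of $z$, and $|V|=|\alpha_0|\,\Delta T$ with $\alpha_0\ne 0$ (forced by $V\ne 0$), the inequality $\theta^*\le 0$ becomes
\[
\frac12\,\frac{\alpha_0^2\,(\Delta T)^2}{(1+\gamma)^2}\le \frac{\alpha_0^2}{z}\,\Delta T
\quad\Longleftrightarrow\quad
z\,\Delta T\le 2(1+\gamma)^2,
\]
after cancelling the positive factor $\alpha_0^2\,\Delta T$. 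This is precisely \eqref{eq-decreasing-criterion}, completing the proof. I do not expect a genuine obstacle here: the only points needing care are the sign bookkeeping in passing between $T_x$ and $u_y$ and the boundary case $\theta^*=0$, where "strictly decreasing'' must be checked by hand — both handled by the strict concavity of $u$ noted above.
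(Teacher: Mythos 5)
Your proposal is correct and follows essentially the same route as the paper: both reduce the question to the sign of the initial slope $\theta^*=u_y(0)$ of the transformed solution (using strict concavity of $u=K(T)$ to handle the boundary case) and then use the explicit value $\theta^*=\tfrac12\tfrac{|V|}{1+\gamma}-\int_{u_c}^{u_h}\hat{\rho}(u)\,du\,/\,\tfrac{|V|}{1+\gamma}$, which is exactly the content of the preceding Lemma \eqref{hot-side-relative-fourier-heat-flux} that the paper invokes. Your slightly more careful bookkeeping of the equivalence ``$T$ strictly decreasing $\iff\theta^*\le 0$'' (including $\theta^*=0$) is a welcome but inessential refinement of the paper's argument.
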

\begin{proof}
As the solution $T$ is strictly concave, it suffices to find a necessary and sufficient condition for $T_x(0) \leq 0$.
Observe that $\int_{T_c}^{T_h} \rho(T) \kappa(T)\,dT = \frac{V^2}{z\Delta T}$. By \eqref{hot-side-relative-fourier-heat-flux},
\begin{equation}\label{eq-hot-side-relative-fourier-heat-flux-for-decreasing-criterion}
\frac{-\kappa(T_h) T_x(0)}{J[T]} = -\frac{1}{2} \frac{|V|}{1+\gamma} + \frac{|V|}{z \Delta T} (1+\gamma).
\end{equation}
Hence $T_x(0) \leq 0$ if and only if the right-hand side of \eqref{eq-hot-side-relative-fourier-heat-flux-for-decreasing-criterion} is nonnegative, that is, \eqref{eq-decreasing-criterion} holds.
\end{proof}

\begin{Cor}\label{cor-decreasing-temperature-profile-at-the-maximum-efficiency}
Suppose that $V \not=0$ and the assumptions on $\kappa$ and $\rho$ hold.
Then the solution $T \in C^2([0, L])$ of \eqref{te-eqn-constant-Seebeck} is strictly decreasing when it attains the maximum efficiency.
\end{Cor}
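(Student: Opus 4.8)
The plan is to deduce the corollary immediately from the decreasing-temperature criterion of Proposition~\ref{prop-decreasing-temperature-criterion} together with the location of the optimal load ratio identified in Theorem~\ref{thm-explicit-efficiency-formula}. By Proposition~\ref{prop-decreasing-temperature-criterion}, the solution $T$ of \eqref{te-eqn-constant-Seebeck} is strictly decreasing exactly when $z\Delta T \leq 2(1+\gamma)^2$, so it suffices to verify this inequality at the load ratio for which the efficiency is maximal. Since $V\not=0$ forces $\alpha_0\not=0$ and $T_h>T_c>0$, the hypotheses of Theorem~\ref{thm-explicit-efficiency-formula} are met, and that theorem gives that the maximum efficiency is attained at the unique value $\gamma_{\mathrm{opt}}=\sqrt{1+zT_m}$, where $T_m=\tfrac12(T_h+T_c)$.

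First I would record two elementary facts. One: $z>0$, because the numerator $\alpha_0^2$ in \eqref{def-z} is positive (as $\alpha_0\not=0$) and the denominator $\tfrac1{\Delta T}\int_{T_c}^{T_h}\rho(T)\kappa(T)\,dT$ is positive (as $\rho,\kappa>0$ on $[T_c,\infty)$ and $\Delta T>0$). Two: $\Delta T=T_h-T_c<T_h+T_c=2T_m$, which uses $T_c>0$.

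Then the verification is a one-line estimate: at $\gamma=\gamma_{\mathrm{opt}}$ one has
\[
2(1+\gamma_{\mathrm{opt}})^2 \;\geq\; 2\gamma_{\mathrm{opt}}^2 \;=\; 2(1+zT_m) \;>\; 2zT_m \;>\; z\Delta T,
\]
where the first inequality is $(1+\gamma)^2\geq\gamma^2$ for $\gamma\geq0$, the identity is the definition of $\gamma_{\mathrm{opt}}$, and the last inequality uses $z>0$ and $\Delta T<2T_m$. Thus \eqref{eq-decreasing-criterion} holds (in fact strictly), and Proposition~\ref{prop-decreasing-temperature-criterion} yields that $T$ is strictly decreasing at the maximum efficiency.

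There is essentially no genuine obstacle here; the proof is purely a bookkeeping argument. The only points needing a moment's care are checking that the hypotheses of Theorem~\ref{thm-explicit-efficiency-formula} hold under the single assumption $V\not=0$ (this is where the standing constraint $T_h\geq T_c>0$ is used to upgrade to $T_h>T_c>0$), and confirming the sign/size facts $z>0$ and $\Delta T<2T_m$ that drive the chain of inequalities.
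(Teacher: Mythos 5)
Your proposal is correct and follows essentially the same route as the paper: invoke Theorem \ref{thm-explicit-efficiency-formula} to place the maximum at $\gamma_{\mathrm{opt}}=\sqrt{1+zT_m}$, then verify the criterion of Proposition \ref{prop-decreasing-temperature-criterion} via the chain $2(1+\gamma_{\mathrm{opt}})^2 \geq 2\gamma_{\mathrm{opt}}^2 = 2+2zT_m > z\Delta T$. The extra care you take in checking $z>0$, $\Delta T < 2T_m$, and that $V\not=0$ supplies the hypotheses of Theorem \ref{thm-explicit-efficiency-formula} is consistent with, and slightly more explicit than, the paper's argument.
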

\begin{proof}
By Theorem \ref{thm-explicit-efficiency-formula}, the maximum efficiency is attained when $\gamma = \gamma_{\mathrm{opt}} =  \sqrt{1 + z T_m}$. Next note that $2T_m \geq \Delta T$. Hence
\[
2(1+\gamma_{\mathrm{opt}})^2 > 2\gamma_{\mathrm{opt}}^2 = 2 + 2 zT_m > z \Delta T.
\]
As the criterion \eqref{eq-decreasing-criterion} holds, the solution $T$ is strictly decreasing.
\end{proof}

\begin{ExQED}[Criterion of Decreasing Temperature Profile for Constant Properties]
Suppose that $\kappa \equiv \kappa_0$, $\rho \equiv \rho_0$, and $\alpha \equiv \alpha_0$ for some constants $\kappa_0 > 0$, $\rho_0 > 0$, $\alpha_0 \not= 0$. In \cite[Eq. (21)]{chung2014nonlocal} it is observed that the solution $T \in C^2([0, L])$ of \eqref{te-eqn-constant-Seebeck} is strictly decreasing if and only if $\frac{\alpha_0^2}{\rho_0 \kappa_0} \leq \frac{2(1+\gamma)^2}{\Delta T}$. This follows immediately from \eqref{eq-decreasing-criterion}.
\end{ExQED}

\begin{ExQED}[Criterion of Decreasing Temperature Profile for Linear Electrical Resistivities]
Suppose that $\kappa \equiv \kappa_0$, $\rho(T) = \rho_1 T$, and $\alpha \equiv \alpha_0$ for some constants $\kappa_0 > 0$, $\rho_1 > 0$, $\alpha_0 > 0$. In \cite[Eq. (38)]{chung2014nonlocal} it is observed that the solution $T \in C^2([0, L])$ of \eqref{te-eqn-constant-Seebeck} is strictly decreasing if and only if
\begin{equation}\label{eq-decreasing-criterion-for-linear-rho}
\frac{T_c}{T_h} \leq \cos \Big(\sqrt{ \frac{\rho_1}{\kappa_0} } J[T] L \Big).
\end{equation}
Here we show that this condition follows from \eqref{eq-decreasing-criterion}.

Let $a := \sqrt{ \frac{\rho_1}{\kappa_0} } J[T]$ for brevity. Then the solution is explicitly given by $T(x) = T_h \cos(ax) + \frac{T_c-T_h\cos(aL)}{\sin(aL)} \sin(ax)$. From the uniqueness of solutions, we also have $0 < aL < \pi$. Using the trigonometric identity $\frac{1-\cos\theta}{\sin\theta} = \frac{\sin\theta}{1+\cos\theta}$, we can compute that
\[
\frac{1}{1+\gamma} = \frac{J[T]}{V} \int_0^L \rho(T(x))\,dx = \frac{ 2T_m \sqrt{\rho_1 \kappa_0} }{V} \frac{\sin(aL)}{1+\cos(aL)}.
\]
On the other hand, one can easily compute that $z = \frac{\alpha_0^2}{\rho_1 \kappa_0 T_m}$. Put these two identities into \eqref{eq-decreasing-criterion}, then we have $\frac{\Delta T}{2T_m} \geq \big( \frac{\sin(aL)}{1+\cos(aL)} \big)^2 = \big( \tan(\frac{1}{2}aL) \big)^2$. This is equivalent to $2\arctan \big( \sqrt{ \frac{\Delta T}{2T_m} } \big) \geq aL$ because $0 < aL < \pi$. Taking cosine on both sides (note that taking cosine reverses the inequality) and using the identity $\cos(2\arctan(\theta)) = \frac{1-\theta^2}{1+\theta^2}$, we have
\[
\cos(aL) \geq \cos\Big(2\arctan\big(\sqrt{ \frac{\Delta T}{2T_m} }\big)\Big) = \frac{T_c}{T_h},
\]
which is \eqref{eq-decreasing-criterion-for-linear-rho}.
\end{ExQED}

\begin{ExQED}[Relation between the Relative Fourier Heat Flux and the Maximum Efficiency]
Suppose that $\alpha \equiv \alpha_0$ for some constant $\alpha_0 \not= 0$. For fixed functions of $\kappa(T)$ and $\rho(T)$, let us assume that a temperature solution $T_{\mathrm{opt}}$ of the thermoelectric equation \eqref{only-te-eqn}--\eqref{seebeck-voltage} attains the maximum efficiency $\eta_{\mathrm{max}}$, and that $T_{\mathrm{opt}}(x)$ is strictly decreasing with respect to $x$. Under these assumptions, Sherman et al. \cite[Below Eq. (57)]{sherman1960calculation} \emph{formally} showed that the hot-side relative Fourier heat flux can be written in terms of the maximum efficiency as follows:
\begin{equation}\label{eq-relative-fourier-heat-flux-at-maximum-efficiency}
\frac{-\kappa(T_h) \frac{d}{dx}T_{\mathrm{opt}}(0)}{J[T_{\mathrm{opt}}]} = \frac{1 - \eta_{\mathrm{max}}}{ \sqrt{1 - (1 - \eta_{\mathrm{max}})^2} } \sqrt{ 2 \int_{T_c}^{T_h} \rho(T)\kappa(T)\,dT }.
\end{equation}

Under the assumption that $\alpha \equiv \alpha_0$ for some constant $\alpha_0 \not=0$, and the assumptions in Theorem \ref{thm-explicit-efficiency-formula}, we can rigorously prove the above relation \eqref{eq-relative-fourier-heat-flux-at-maximum-efficiency}. For brevity, let $r := \int_{T_c}^{T_h} \rho(T)\kappa(T)\,dT$. Using \eqref{hot-side-relative-fourier-heat-flux}, \eqref{eq-optimal-load-ratio} and the relation $|V| = \sqrt{r z \Delta T}$, we have
\begin{equation}\label{eq-relative-fourier-heat-flux-in-terms-of-gamma-zDeltaT}
\begin{aligned}
\frac{-\kappa(T_h) \frac{d}{dx}T_{\mathrm{opt}}(0)}{J[T_{\mathrm{opt}}]} &= -\frac{1}{2} \frac{|V|}{1+\gamma_{\mathrm{opt}}} + \frac{r}{ \frac{|V|}{1+\gamma_{\mathrm{opt}}} } \\
&= \frac{1}{2} \Big( -\frac{\sqrt{z \frac{\Delta T}{2} }}{1+\gamma_{\mathrm{opt}}} + \frac{ 1+\gamma_{\mathrm{opt}} }{ \sqrt{z \frac{\Delta T}{2}} } \Big) \sqrt{2r}.
\end{aligned}
\end{equation}
Next observe that by \eqref{eq-maximum-efficiency} and $T_c = T_m - \frac{\Delta T}{2}$,
\begin{align*}
\frac{1}{\eta_{ \mathrm{max} }} - 1 &= \frac{1}{\Delta T} \frac{ T_h\gamma_{\mathrm{opt}} + T_m - \frac{\Delta T}{2} }{ \gamma_{\mathrm{opt}}-1 } -1 = \frac{1}{\Delta T} \frac{ (T_m - \frac{\Delta T}{2})\gamma_{\mathrm{opt}} + T_m + \frac{\Delta T}{2} }{ \gamma_{\mathrm{opt}}-1 } \\
&= \frac{1}{\Delta T} \frac{ (T_m - \frac{\Delta T}{2})\gamma_{\mathrm{opt}} + T_m + \frac{\Delta T}{2} }{ \gamma_{\mathrm{opt}}-1 } \cdot \frac{ \gamma_{\mathrm{opt}}+1}{\gamma_{\mathrm{opt}}+1} \\
&= \frac{1}{\Delta T} \frac{ (T_m-\frac{\Delta T}{2})\gamma_{\mathrm{opt}}^2 + 2T_m \gamma_{\mathrm{opt}} + (T_m+\frac{\Delta T}{2}) }{ zT_m } \\
&= \frac{1}{2} \frac{1}{z \frac{\Delta T}{2}} \Big(  (1 + \gamma_{\mathrm{opt}})^2 - z \frac{\Delta T}{2} \Big).
\end{align*}
Similarly, we have
\[
\frac{2}{\eta_{ \mathrm{max} }} - 1 = \frac{1}{z \frac{\Delta T}{2}} (1 + \gamma_{\mathrm{opt}})^2.
\]
Therefore the right-hand side of \eqref{eq-relative-fourier-heat-flux-at-maximum-efficiency} is
\begin{align*}
\frac{1 - \eta_{\mathrm{max}}}{ \sqrt{1 - (1 - \eta_{\mathrm{max}})^2} } \sqrt{2r} &= \frac{ \frac{1}{\eta_{\mathrm{max}}} - 1 }{ \sqrt{ (\frac{1}{ \eta_{\mathrm{max}} })^2 - ( \frac{1}{\eta_{\mathrm{max}}} - 1)^2} } \sqrt{2r} = \frac{ \frac{1}{\eta_{\mathrm{max}}} - 1 }{ \sqrt{ \frac{2}{ \eta_{\mathrm{max}} }-1 } } \sqrt{2r} \\
&= \frac{1}{2}\Big( - \frac{ \sqrt{z \frac{\Delta T}{2}} }{ 1+\gamma_{\mathrm{opt}} }  + \frac{ 1+\gamma_{\mathrm{opt}} }{ \sqrt{z \frac{\Delta T}{2}} } \Big)  \sqrt{2r},
\end{align*}
which is equal to \eqref{eq-relative-fourier-heat-flux-in-terms-of-gamma-zDeltaT}. Therefore the relation \eqref{eq-relative-fourier-heat-flux-at-maximum-efficiency} holds. Note that the Sherman et al.'s assumption on the strictly decreasing profile of $T_{\mathrm{opt}}$ is also true by Corollary \ref{cor-decreasing-temperature-profile-at-the-maximum-efficiency}.
\end{ExQED}

\section{Uniqueness may fail under a given load resistance}\label{sec-uniqueness-may-fail}

A direct way to consider an external load resistance is to specify the load resistance $R_{\mathrm{load}}$ rather than the load ratio $\gamma$. If we impose $R_{\mathrm{load}}$ instead of $\gamma$, the nonlocal quantity $R[T]$ in thermoelectric equation \eqref{te-eqn-constant-Seebeck} should be modified while all the other formulations in \eqref{te-eqn-constant-Seebeck} remain the same:
\begin{equation}\label{te-eqn-load-resistance}
\left\{
\begin{aligned}
&(\kappa(T)T_x)_x + \Big(\frac{V}{R[T] A_c}\Big)^2 \rho(T) = 0 \quad \text{for $0<x<L$},\\
&R[T] := \frac{1}{A_c} \int_0^L \rho(T(x))\,dx + R_{\mathrm{load}},\\
&V := \alpha_0 (T_h-T_c),\\
&T(0) = T_h, ~T(L) = T_c, \quad T_h \geq T_c \geq 0.
\end{aligned}
\right.
\end{equation}
Following the similar lines of the arguments in Section \ref{sec-unique-existence}, we can find an equivalent equation:
\begin{Thm}
Suppose that $V \not= 0$ and the assumptions on $\kappa$ and $\rho$ hold.
Then there is a one-to-one correspondence between a solution $T \in C^2([0,L])$ of \eqref{te-eqn-load-resistance} and a solution $u$ of the following \emph{local} equation: for some $y_c > 0$, $u \in C^2([0,y_c])$ satisfies
\begin{equation}\label{local-eqn-load-resistance}
\left\{
\begin{aligned}
&u_{yy} + \hat{\rho}(u) = 0 \quad \text{for $0<y<y_c$},\\
&u(0)=u_h := K(T_h), ~u(y_c) = u_c := K(T_c),\\
&\int_0^{y_c} (\hat{\rho}(u(y)) + S_{\mathrm{load}}) \,dy = |V|,
\end{aligned}
\right.
\end{equation}
where $\hat{\rho}(u) := (\rho \circ K^{-1})(u)$ and $S_{\mathrm{load}} := R_{\mathrm{load}}A_c/L$. The $K$ is defined in Proposition \ref{prop-K-diffeomorphism}.
\end{Thm}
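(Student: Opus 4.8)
The plan is to follow the proof of Theorem~\ref{thm-one-to-one-local-eqn} almost verbatim, the key point being that \eqref{te-eqn-load-resistance} differs from \eqref{te-eqn-constant-Seebeck} only in the nonlocal relation tying the constant $J[T]$ to $\int_0^L\rho(T(x))\,dx$; the diffusion equation $(\kappa(T)T_x)_x+\rho(T)J[T]^2=0$ and the Dirichlet data are identical. First I would record that every preliminary fact from Section~\ref{sec-unique-existence} transfers without change, since those results involve only the transformed ODE $u_{yy}+\hat{\rho}(u)=0$ together with condition~\eqref{condition-rho-times-kappa}: the a~priori bound $T\ge T_c$ is obtained exactly as in Proposition~\ref{prop-lower-bound-of-T} (it uses only that $(K(T))_{xx}=-\rho(T)J[T]^2\le 0$, so $K(T)$ is concave, and Lemma~\ref{lem-lower-bound-of-concave-func}), so one may again work on $[T_c,\infty)$; the diffeomorphism $K$ of Proposition~\ref{prop-K-diffeomorphism} and the range control $u(y)\in[T_c,K_\infty)$ of Proposition~\ref{prop-rho-hat-well-defined} (hence the well-definedness of $\hat{\rho}(u)=(\rho\circ K^{-1})(u)$ along any solution) apply as stated.

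Next I would build the forward map $\mathcal{F}:T\mapsto u$ via the same substitution~\eqref{eqn-y-and-u}, $y:=|J[T]|\,x$ and $u(y):=K(T(x))$, with $y_c:=|J[T]|\,L>0$ (meaningful because $V\ne 0$ and $\rho>0$ force $J[T]\ne 0$). By~\eqref{eqn-u-y-and-u-yy} the equation and boundary conditions turn into exactly~\eqref{eqn-u}. The only genuinely new computation is to translate $R[T]=\frac{1}{A_c}\int_0^L\rho(T(x))\,dx+R_{\mathrm{load}}$: rewriting it as $|J[T]|\bigl(\int_0^L\rho(T(x))\,dx+A_cR_{\mathrm{load}}\bigr)=|V|$, applying the change of variables $x\mapsto y$ to the internal-resistance integral (so that $|J[T]|\int_0^L\rho(T(x))\,dx=\int_0^{y_c}\hat{\rho}(u(y))\,dy$) and using $|J[T]|A_cR_{\mathrm{load}}=(y_c/L)A_cR_{\mathrm{load}}=y_cS_{\mathrm{load}}$, one arrives at
\[
\int_0^{y_c}\bigl(\hat{\rho}(u(y))+S_{\mathrm{load}}\bigr)\,dy=|V|,
\]
which is the nonlocal constraint in~\eqref{local-eqn-load-resistance}. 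Injectivity of $\mathcal{F}$ I would copy verbatim from Theorem~\ref{thm-one-to-one-local-eqn}: if $\mathcal{F}(T_1)=\mathcal{F}(T_2)=u$ with equal spatial scalings then $T_1=T_2$ since $K$ is invertible, while $|J[T_1]|<|J[T_2]|$ together with the concavity $u_{yy}=-\hat{\rho}(u)<0$ would force $u_h=u_c$, hence $V=0$, a contradiction.

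For the reverse map $\mathcal{G}:u\mapsto T$ I would set $x:=(L/y_c)\,y$ and $T(x):=K^{-1}(u(y))$ (legitimate by Proposition~\ref{prop-rho-hat-well-defined}), check that the differential equation and Dirichlet data in~\eqref{te-eqn-load-resistance} hold with $|J[T]|=y_c/L$, and then carry out the one substantive verification, namely that $J[T]$ satisfies its self-consistent definition: running the substitution backwards gives $\int_0^L\rho(T(x))\,dx=(L/y_c)\bigl(|V|-y_cS_{\mathrm{load}}\bigr)=L|V|/y_c-A_cR_{\mathrm{load}}$, hence $R[T]=L|V|/(A_cy_c)$ and $\bigl|V/(R[T]A_c)\bigr|=y_c/L=|J[T]|$, as required. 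That $\mathcal{F}\circ\mathcal{G}$ and $\mathcal{G}\circ\mathcal{F}$ are identities is then immediate from the definitions, and injectivity of $\mathcal{G}$ follows as in Theorem~\ref{thm-one-to-one-local-eqn} from the uniqueness of the matching point $y_c$ (Lemma~\ref{lem-lower-bound-of-concave-func}). I do not expect a real obstacle here: the whole content is the change-of-variables bookkeeping of the nonlocal term — the identity $|J[T]|A_cR_{\mathrm{load}}=y_cS_{\mathrm{load}}$ and the self-consistency of the reconstructed $J[T]$ — while the rest is a transcription of Section~\ref{sec-unique-existence}.
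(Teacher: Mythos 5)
Your proposal is correct and takes essentially the same route as the paper, which proves this theorem only by remarking that it follows the lines of the argument in Section~\ref{sec-unique-existence} (i.e.\ of Theorem~\ref{thm-one-to-one-local-eqn}). Your explicit bookkeeping of the nonlocal term, namely $|J[T]|\,A_c R_{\mathrm{load}} = y_c S_{\mathrm{load}}$ in the forward direction and the self-consistency $R[T] = L|V|/(A_c y_c)$, hence $|J[T]|=y_c/L$, in the reverse direction, is exactly the detail that sketch leaves to the reader.
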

Hence the nonuniqueness of solutions of \eqref{local-eqn-load-resistance} yields the nonuniqueness of solutions of \eqref{te-eqn-load-resistance}.

\begin{Thm}[Nonuniqueness]\label{nonuniqueness-thm}
Let $T_h > T_c \geq 0$, $L > 0$, $A_c > 0$ be given constants and suppose that $\kappa$ be a given function satisfying the assumption on $\kappa$.  Then there are a function $\rho$ satisfying the assumption on $\rho$, and constants $R_{\mathrm{load}} > 0$, $\alpha_0 \not= 0$ such that the equation \eqref{te-eqn-load-resistance} has two different classical solutions.
\end{Thm}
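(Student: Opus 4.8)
The plan is to transfer the question to the reduced local problem \eqref{local-eqn-load-resistance}: by the one-to-one correspondence between classical solutions of \eqref{te-eqn-load-resistance} and solutions of \eqref{local-eqn-load-resistance} established just above, it is enough to produce a function $\rho$ satisfying the assumption on $\rho$, a constant $S_{\mathrm{load}}=R_{\mathrm{load}}A_c/L>0$, and a value $|V|=|\alpha_0|(T_h-T_c)>0$ for which \eqref{local-eqn-load-resistance} has two solutions. Exactly as in Section \ref{sec-unique-existence}, every candidate solution is the restriction to $[0,y_c(\theta)]$ of the unique solution $u(\cdot;\theta)$ of the initial value problem \eqref{ivp} with $u_y(0)=\theta$, where $y_c(\theta)>0$ is the first time $u(\cdot;\theta)$ meets $u_c$ (Proposition \ref{prop-ivp-can-match-bc}); and the nonlocal constraint in \eqref{local-eqn-load-resistance} becomes
\[
\widetilde I(\theta):=I(\theta)+S_{\mathrm{load}}\,y_c(\theta)=|V|,\qquad
I(\theta)=\theta+\sqrt{\theta^2+2\textstyle\int_{u_c}^{u_h}\hat\rho},
\]
the formula for $I$ coming from Lemma \ref{lem-explicit-I}. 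Since $I$ is manifestly continuous and $y_c(\cdot)$ is continuous (continuous dependence for \eqref{ivp} together with transversality of the crossing at $y_c$, which holds because $u$ is strictly concave with $u(y_c)=u_c<u_h=u(0)$), the map $\widetilde I$ is continuous, and $\widetilde I(\theta)\ge I(\theta)\to\infty$ as $\theta\to\infty$. Hence it suffices to choose $\rho$ and $S_{\mathrm{load}}$ so that $\widetilde I$ is \emph{not} monotone: then some value $v>0$ is attained at two distinct slopes, and setting $|V|:=v$, i.e. $\alpha_0:=v/(T_h-T_c)\ne 0$ and $R_{\mathrm{load}}:=S_{\mathrm{load}}L/A_c$, gives two distinct solutions of \eqref{local-eqn-load-resistance}, hence of \eqref{te-eqn-load-resistance}.

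Because $I$ is strictly increasing with $0<I'<2$, non-monotonicity of $\widetilde I$ follows once $y_c$ is strictly decreasing on a subinterval of $(0,\infty)$ and $S_{\mathrm{load}}$ is large. To arrange this I will build a tall, steep ``spike'' of $\hat\rho$ just above $u_h$. Fix $\epsilon>0$ and $b>0$ with $u_h+b<K_\infty$; put $\hat\rho\equiv\epsilon$ on $[u_c,u_h]$, $\hat\rho(u)=\epsilon+c_1(u-u_h)$ on $[u_h,u_h+b]$ with $c_1$ large, and extend $\hat\rho$ to $[u_h+b,K_\infty)$ so that it stays positive and locally Lipschitz and has divergent integral there (a constant if $K_\infty=\infty$, otherwise $\epsilon+c_1b+(K_\infty-u)^{-1}-(K_\infty-u_h-b)^{-1}$, which blows up at $K_\infty$). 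Then $\rho:=\hat\rho\circ K$ satisfies the assumption on $\rho$: it is positive, it is locally Lipschitz as a locally Lipschitz function composed with the $C^2$ diffeomorphism $K$, and $\int_{T_c}^\infty\rho\kappa\,dT=\int_{u_c}^{K_\infty}\hat\rho=\infty$.

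For $\theta>0$, the reflection argument in the proof of Lemma \ref{lem-symmetry-of-I} gives $y_c(\theta)=2P(\theta)+D(\theta)$, where $P(\theta)$ is the time for $u(\cdot;\theta)$ to rise from $u_h$ to its turning point and $D(\theta)=y_c(-\theta)$ is the descent time from $u_h$ to $u_c$. The descent feels only $\hat\rho|_{[u_c,u_h]}\equiv\epsilon$, so the first-integral identity \eqref{eq-u-y} yields the exact, strictly decreasing formula $D(\theta)=2(u_h-u_c)\big/\big(\sqrt{\theta^2+2\epsilon(u_h-u_c)}+\theta\big)$. For the ascent, if $c_1>\theta_0^2/b^2$ then for $\theta\le\theta_0$ the turning point stays inside $[u_h,u_h+b]$ (otherwise $\theta^2/2=\int_{u_h}^{U(\theta)}\hat\rho\ge\tfrac{c_1}2b^2$, forcing $\theta\ge b\sqrt{c_1}>\theta_0$), and there the first integral gives $U(\theta)-u_h\le\theta/\sqrt{c_1}$ and the explicit bound $P(\theta)=\tfrac1{\sqrt{c_1}}\big(\tfrac\pi2-\arcsin\tfrac{\epsilon}{\sqrt{\epsilon^2+c_1\theta^2}}\big)\le\tfrac{\pi}{2\sqrt{c_1}}$ (a crude chord estimate $P(\theta)\le 2(U(\theta)-u_h)/\theta$ would also suffice). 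Now pick two slopes $\theta_1<\theta_2=:\theta_0$ of order $\sqrt{\epsilon(u_h-u_c)}$: then $D(\theta_1)-D(\theta_2)$ is a fixed positive number independent of $c_1$, while $2|P(\theta_1)-P(\theta_2)|\le\pi/\sqrt{c_1}$, so taking $c_1$ large enough forces $y_c(\theta_1)>y_c(\theta_2)$. Since $I(\theta_2)-I(\theta_1)\le 2(\theta_2-\theta_1)$ is a fixed bound, taking $S_{\mathrm{load}}$ large enough then gives $\widetilde I(\theta_1)>\widetilde I(\theta_2)$. Combined with $\widetilde I(\theta)\to\infty$ and continuity of $\widetilde I$, the intermediate value theorem produces a value $v\in(\widetilde I(\theta_2),\widetilde I(\theta_1))$ attained once in $(\theta_1,\theta_2)$ and again at some slope beyond $\theta_2$; these slopes differ already in $u_y(0)$, yielding two distinct solutions of \eqref{local-eqn-load-resistance}, hence two distinct classical solutions of \eqref{te-eqn-load-resistance} for $\alpha_0=v/(T_h-T_c)$ and $R_{\mathrm{load}}=S_{\mathrm{load}}L/A_c$.

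The routine parts are the reduction to \eqref{local-eqn-load-resistance}, the identification of solutions with initial slopes, the closed form for $D$, and the final intermediate-value argument. The real work is the uniform smallness of the ascent time $P(\theta)$: one must verify that enough steepness of $\hat\rho$ above $u_h$ simultaneously keeps the turning point inside the steep region (so the $\arcsin$ formula, or the chord bound, applies) and makes $P(\theta)$ negligible for the two chosen slopes, all without violating positivity, local Lipschitz continuity, or $\int_{T_c}^\infty\rho\kappa\,dT=\infty$; the case $K_\infty<\infty$, where only the thin interval $(u_h,K_\infty)$ is available for the spike and its tail, is the delicate point, handled by the $(K_\infty-u)^{-1}$ blow-up above.
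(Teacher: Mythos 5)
Your proposal is correct, and at its core it is the same argument as the paper's: reduce to the local problem \eqref{local-eqn-load-resistance}, parametrize solutions by the initial slope $\theta$ via the shooting problem, choose $\hat\rho$ constant below $u_h$ and steeply linear just above $u_h$, and show that $H(\theta)=I(\theta)+S_{\mathrm{load}}\,y_c(\theta)$ fails to be monotone for a steep enough spike and a large enough load, so two slopes share the same value $|V|$. The differences are in execution. The paper keeps the linear part uncapped, so $H(\theta)$ has the single closed form \eqref{eq-H-of-theta} for all $\theta>0$; continuity and the sign analysis are then read off from $H'$, with explicit numerical thresholds ($M\theta_1^2/\hat\rho_h^2=16$, $S_{\mathrm{load}}/\hat\rho_h>51/13$), and the same formulas later yield the concrete Examples with two and three solutions. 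You instead cap the spike at $u_h+b$, bound the ascent time uniformly by $\pi/(2\sqrt{c_1})$, compare two fixed slopes, and then need continuity of $y_c(\theta)$ beyond the explicit range, which you correctly attribute to continuous dependence plus the transversal crossing $u_y(y_c)<0$ (standard, but it is the one step you leave at the level of assertion, whereas the paper's closed form makes it automatic). On the other hand, your capped spike with the $(K_\infty-u)^{-1}$ tail is genuinely more careful on one point: when $K_\infty=T_c+\int_{T_c}^\infty\kappa<\infty$, the paper's $\rho=\hat\rho\circ K$ with globally linear $\hat\rho$ gives $\int_{T_c}^\infty\rho\kappa\,dT=\int_{u_c}^{K_\infty}\hat\rho\,du<\infty$, so condition \eqref{condition-rho-times-kappa} in the assumption on $\rho$ is not literally met in that case and requires exactly the kind of harmless modification of $\hat\rho$ above the range visited by the solutions that you build in from the start. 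So: same route, slightly less explicit constants, one standard continuity fact taken for granted, and a fix of the $K_\infty<\infty$ corner case that the paper's construction glosses over.
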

\begin{proof}
We construct two different solutions of \eqref{local-eqn-load-resistance}. For a given $\hat{\rho}_h > 0$, choose the following function as $\hat{\rho}$:
\[
\hat{\rho}(u) :=
\begin{cases}
M(u-u_h) + \hat{\rho}_h, \quad \text{for $u \geq u_h$,}\\
\hat{\rho}_h, \quad \text{for $u < u_h$.}
\end{cases}
\]
Here $M$ is a positive constant which will be determined later.
The above $\hat{\rho}$ is uniformly Lipschitz continuous on $[0, \infty)$ and has a positive lower bound $\hat{\rho}_h$.
Let $u(y; \theta)$ be the solution of the initial value problem
\[
\left\{
\begin{aligned}
&u_{yy} + \hat{\rho}(u) = 0 \quad \text{for $y > 0$},\\
&u(0)=u_h, ~u_y(0) = \theta,
\end{aligned}
\right.
\]
then by the same arguments in the previous section, we can show that for any $\theta \in \R$, there is a unique $y_c=y_c(\theta) >0$ such that $u \in C^2([0, y_c])$ and $u(y_c(\theta); \theta) = u_c$. This $u$ is a solution of \eqref{local-eqn-load-resistance} if it satisfies the nonlocal constraint in \eqref{local-eqn-load-resistance}. The nonlocal constraint can be rewritten by
\[ H(\theta) :=  I(\theta) + S_{\mathrm{load}} \,y_c(\theta) = |V|,\]
where the $I(\theta)$ is the same one given in Lemma \ref{lem-explicit-I}.

Suppose $\theta > 0$. Then $u(y; -\theta) = -\frac{\hat{\rho}_h}{2}y^2 -\theta y + u_h$ so that
\[ y_c(-\theta) = \frac{1}{\hat{\rho}_h} (-\theta + \sqrt{\theta^2 + 2 \hat{\rho}_h \Delta u}), \quad \text{where $\Delta u := u_h - u_c$.}\]
On the other hand, by the same arguments in the previous section, there is a unique $y_0 = y_0(\theta) > 0$ such that $u_y(y_0(\theta); \theta) = 0$. Furthermore $y_c(\theta) = 2 y_0(\theta) + y_c(-\theta)$.
In $y \in [0, 2y_0(\theta)]$, the $u(y; \theta)$ satisfies $u \geq u_h$, hence it is explicitly given by
\[u(y; \theta) = u_h + \frac{\hat{\rho}_h}{M} (\cos(\sqrt{M}y) - 1) + \frac{\theta}{\sqrt{M}} \sin(\sqrt{M}y).\]
Because $y_0(\theta)$ satisfies $u_y(y_0(\theta); \theta) = -\frac{\hat{\rho}_h}{\sqrt{M}} \sin(\sqrt{M}y) + \theta \cos(\sqrt{M}y) = 0$,
we have
\[y_0(\theta) = \frac{1}{\sqrt{M}} \arctan \Big( \frac{\sqrt{M}}{\hat{\rho}_h} \theta \Big).\]
Therefore by the explicit formula of $I(\theta)$ in Lemma \ref{lem-explicit-I}, we have for any $\theta > 0$,
\begin{align}
H(\theta) & = \theta + \sqrt{\theta^2 + 2\hat{\rho}_h \Delta u} \nonumber \\
& \quad + S_{\mathrm{load}} \Big( \frac{2}{\sqrt{M}} \arctan \Big( \frac{\sqrt{M}}{\hat{\rho}_h} \theta \Big)  + \frac{1}{\hat{\rho}_h} (-\theta + \sqrt{\theta^2 + 2 \hat{\rho}_h \Delta u}) \Big) \nonumber \\
& = \Big( 1- \frac{S_{\mathrm{load}}}{\hat{\rho}_h} \Big)\theta +  \Big( 1+ \frac{S_{\mathrm{load}}}{\hat{\rho}_h} \Big) \sqrt{\theta^2 + 2\hat{\rho}_h \Delta u} + \frac{2 S_{\mathrm{load}}}{\sqrt{M}} \arctan \Big( \frac{\sqrt{M}}{\hat{\rho}_h} \theta \Big). \label{eq-H-of-theta}
\end{align}
Hence
\[
H'(\theta) = 1- \frac{S_{\mathrm{load}}}{\hat{\rho}_h} + \Big( 1+ \frac{S_{\mathrm{load}}}{\hat{\rho}_h} \Big) \frac{\theta}{\sqrt{\theta^2 + 2\hat{\rho}_h \Delta u}} + 2 \frac{S_{\mathrm{load}}}{\hat{\rho}_h} \frac{1}{1 + M \theta^2 / \hat{\rho}_h^2}
\]

Choose a small $\theta_1$ and a large $M$ such that
\begin{equation}\label{nonuniqueness-choice-of-theta1-and-M}
\frac{\theta_1}{\sqrt{\theta_1^2 + 2\hat{\rho}_h \Delta u}} = \frac{1}{2} \quad \text{and} \quad M \theta_1^2/\hat{\rho}_h^2 = 16.
\end{equation}
Then
\[H'(\theta_1) = \frac{3}{2} - \frac{13}{34} \frac{S_{\mathrm{load}}}{\hat{\rho}_h} < 0\]
if
\begin{equation}\label{nonuniqueness-load-assumption}
\frac{S_{\mathrm{load}}}{\hat{\rho}_h} > \frac{51}{13}.
\end{equation}
As $H'(\infty) = 2 > 0$, under the assumptions \eqref{nonuniqueness-choice-of-theta1-and-M} and \eqref{nonuniqueness-load-assumption}, there is a $\theta_2 > \theta_1$ such that
\[
H(\theta_2) = H(\theta_1).
\]
Therefore by defining $|V| := H(\theta_1)$, both $u(y; \theta_1)$ and $u(y; \theta_2)$ satisfy the nonlocal condition $H(\theta_1)=H(\theta_2)=|V|$. As we have found two different solutions of \eqref{local-eqn-load-resistance}, $u(y; \theta_1)$ and $u(y; \theta_2)$, the proof is complete. Note that the assumptions \eqref{nonuniqueness-choice-of-theta1-and-M} and \eqref{nonuniqueness-load-assumption} are not optimal, and can be generalized with a small $\theta_1$, a large $M$, and a large $S_{\mathrm{load}}$.
\end{proof}

\begin{Rem}
A nonuniqueness for the thermistor problem is observed by Cimatti \cite{cimatti1990stationary} when $T_h = T_c$ and an external battery and an external resistor is attached to a conductor. Our nonuniqueness theorem, Theorem \ref{nonuniqueness-thm}, applies when $T_h > T_c$ and the voltage is generated inside a conductor having an external resistor but no external battery.
\end{Rem}

Following the proof of Theorem \ref{nonuniqueness-thm} with specified parameter values, we can find an explicit example of the nonuniqueness.

\begin{Ex}[Three Solutions]\label{ex-three-solutions}
Let $\kappa \equiv 1$, $T_h = 2$, $T_c = A_c = L = 1$.
Now define $M:=48$,
\[
\rho(T) :=
\begin{cases}
M(T-T_h) + T_h, \quad \text{for $T \geq T_h$,}\\
T_h, \quad \text{for $T < T_h$.}
\end{cases}
\]
$R_{\mathrm{load}} := 8$ and $\alpha_0 := -\frac{3}{2}\sqrt{3} + \frac{5}{2}\sqrt{19} + \frac{4}{\sqrt{3}} \arctan(3) \approx 11.18$. Then the equation \eqref{te-eqn-load-resistance} has three classical solutions
\begin{equation}\label{eq-multiple-solutions}
T_i(x) :=
\begin{cases}
\frac{\rho(T_h)}{M}( \cos(\sqrt{M}y_{c,i} x) - 1) + \frac{\theta_i}{\sqrt{M}} \sin(\sqrt{M}y_{c,i} x) + T_h, \quad \text{for $0 \leq x \leq x_{0,i}$,} \\
-y_{c,i}^2 (x-x_{0,i})^2 - \theta_i y_{c,i} (x-x_{0,i}) + T_h, \quad \text{for $x_{0,i} < x \leq 1$,}
\end{cases}
\end{equation}
for $i=0,1,2$ where
\begin{align*}
x_{0,i} &:= \frac{2 \arctan( \sqrt{M}\theta_i / \rho(T_h) )}{ \sqrt{M} y_{c,i} }, \\
y_{c,i} &:= \frac{2}{\sqrt{M}} \arctan(\sqrt{M} \theta_i / \rho(T_h)) + \frac{1}{\rho(T_h)} (-\theta_i + \sqrt{ \theta_i^2+ 2\rho(T_h)\Delta T }), \\
\theta_1 &:= \frac{\sqrt{3}}{2} \approx 0.866,
\end{align*}
and $\theta_0$ and $\theta_2$ are the unique real numbers satisfying $\theta_0 < \theta_ 1 < \theta_2$ and
\[
\Big(1 - \frac{R_{\mathrm{load}}}{\rho(T_h)}\Big) \theta_i + \Big( 1 + \frac{R_{\mathrm{load}}}{\rho(T_h)} \Big) \sqrt{\theta_i^2 + 2 \rho(T_h) \Delta T} + \frac{2 R_{\mathrm{load}}}{\sqrt{M}} \arctan \Big( \frac{\sqrt{M}\theta_i}{\rho(T_h)} \Big) = \alpha_0 \Delta T,
\]
that is,
\[
-3\theta_i + 5\sqrt{\theta_i^2 + 4} + \frac{4}{\sqrt{3}} \arctan(2\sqrt{3} \,\theta_i) = \alpha_0.
\]
A numerical computation shows that $\theta_0 \approx 0.402$ and $\theta_2 \approx 1.483$.

We can check that $R[T_0] = \frac{\alpha_0}{y_{c,0}} \approx 10.24$, $R[T_1] = \frac{\alpha_0}{y_{c,1}} \approx 10.99$ and $R[T_2] = \frac{\alpha_0}{y_{c,2}} \approx 12.41$ so that the load resistance ratios of the three solutions are different. The difference is also expected by the uniqueness theorem, Theorem \ref{thm-unique-existence-of-te-eqn}, because if they were the same,  the solutions should be the same.

\begin{figure}
\hfill
\subfigure[Three Solutions in Example \ref{ex-three-solutions}]{\includegraphics[width=.4\linewidth]{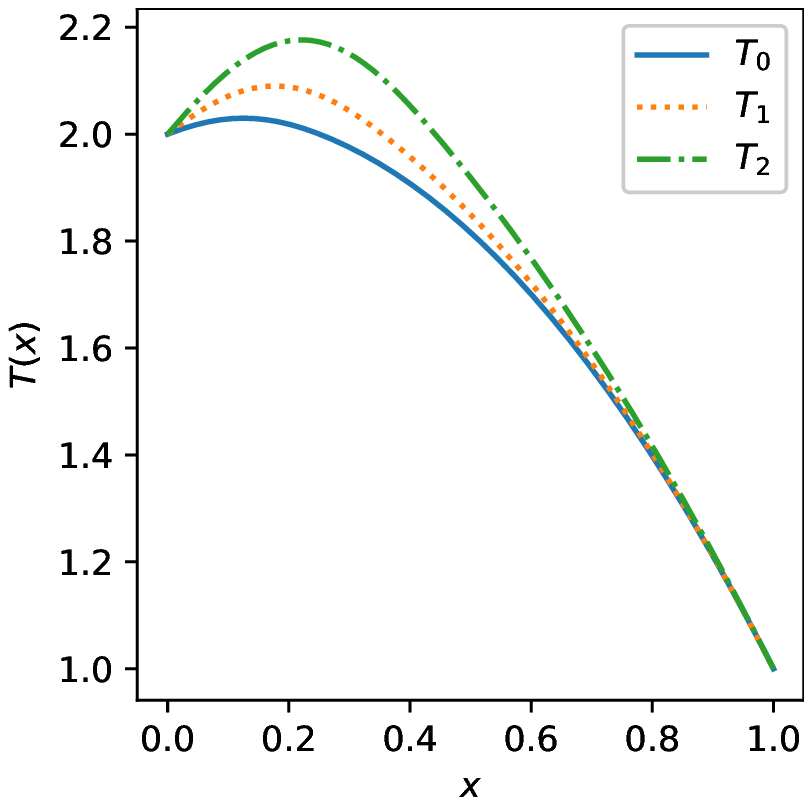}}
\hfill
\subfigure[$H(\theta)$ and $\theta_i$'s in Example \ref{ex-three-solutions}]{\includegraphics[width=0.4\linewidth]{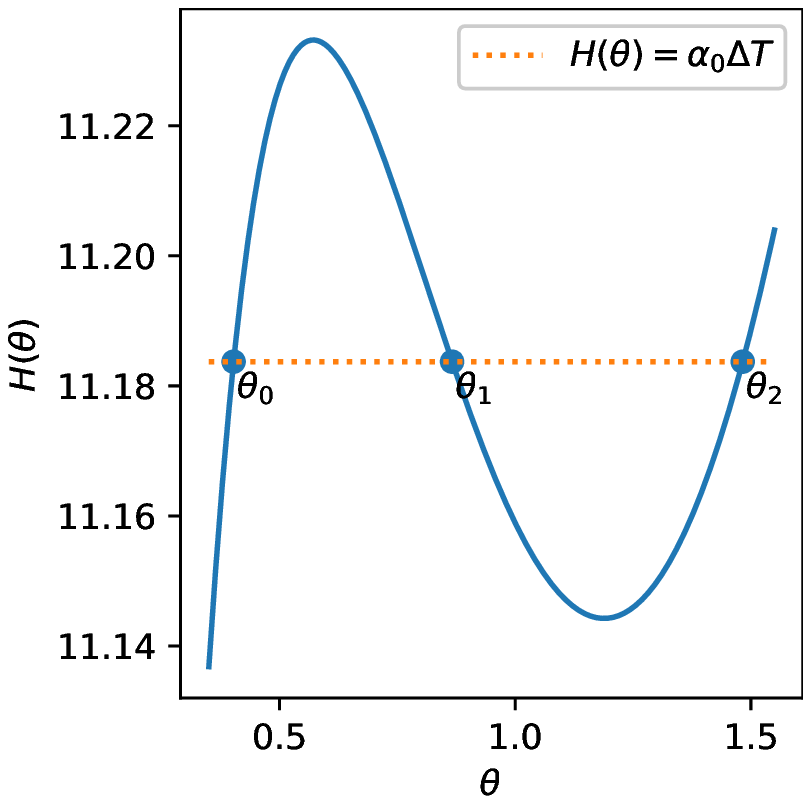} \label{fig-H-of-theta-for-three-solutions}}
\hfill
\caption{There are three solutions in Example \ref{ex-three-solutions}. The number of the solutions is equal to the number of solutions of the equation $H(\theta) = \alpha_0 \Delta T$ as we have shown in the proof of Theorem \ref{nonuniqueness-thm}. The $H(\theta)$ is given in \eqref{eq-H-of-theta}. The Figure \ref{fig-H-of-theta-for-three-solutions} suggests that there are no more than three solutions.}
\end{figure}
\end{Ex}

\begin{Ex}[Two Solutions]\label{ex-two-solutions}
The Figure \ref{fig-H-of-theta-for-three-solutions} shows that there are only two solutions for an appropriate value of $\alpha_0$, that is, there are only two solutions of $H(\theta) = \alpha_0 \Delta T$ for some $\alpha_0 > 0$. Let $\theta_1$ be the unique number satisfying $\theta_1 > 1$ and $H'(\theta) = 0$. A numerical computation shows that $\theta_1 \approx 1.189$. And let $\alpha_0 := H(\theta_1)/\Delta T \approx 11.14$ and keep all the other parameters the same as in Example \ref{ex-three-solutions}. Then there is a $\theta_0 \approx 0.357$ such that $0 < \theta_0 < \theta_1$ and $H(\theta_0) = \alpha_0 \Delta T$. Hence the equation \eqref{te-eqn-load-resistance} has two classical solutions $T_0$ and $T_1$ which are given by \eqref{eq-multiple-solutions}. For the solutions, $R[T_0] \approx 10.18$ and $R[T_1] \approx 11.69$.

\begin{figure}
\hfill
\subfigure[Two Solutions in Example \ref{ex-two-solutions}]{\includegraphics[width=.4\linewidth]{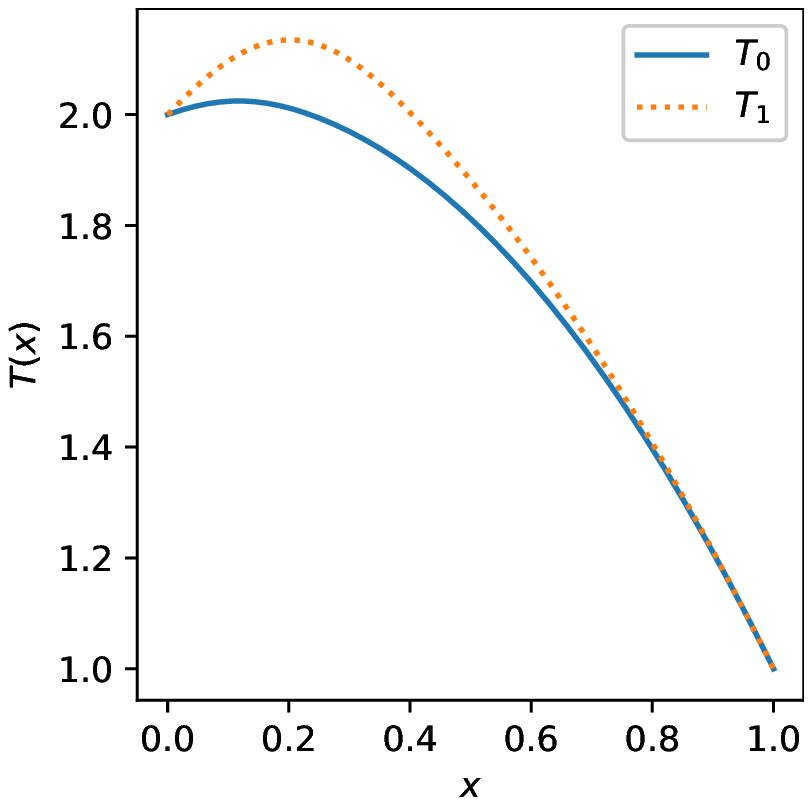}}
\hfill
\subfigure[$H(\theta)$ and $\theta_i$'s in Example \ref{ex-two-solutions}]{\includegraphics[width=0.4\linewidth]{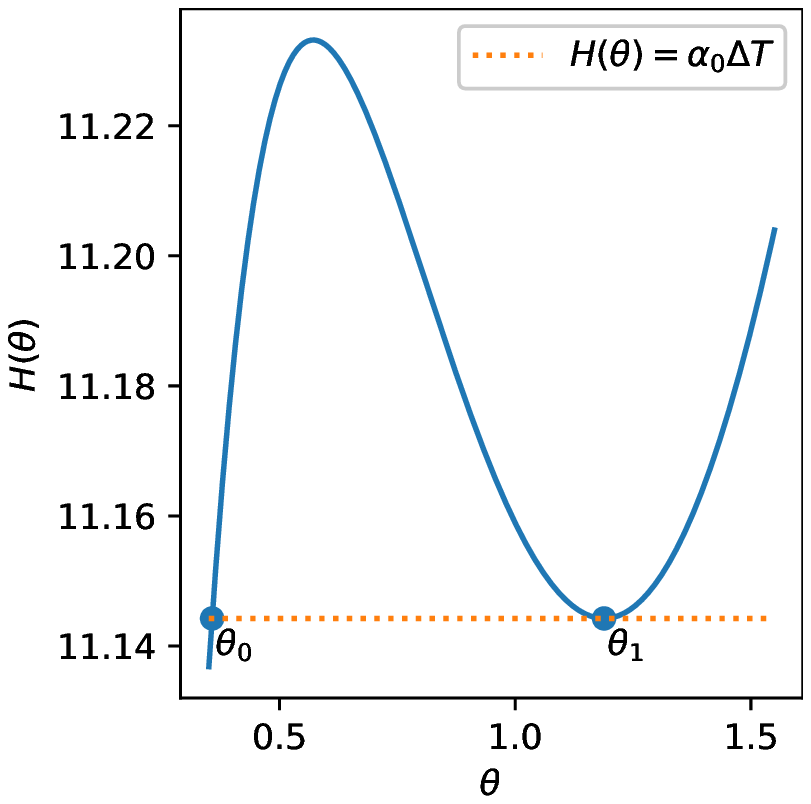}}
\hfill
\caption{There are two solutions in Example \ref{ex-two-solutions}.}
\end{figure}
\end{Ex}

\section*{Acknowledgement}
J.C. and B.R. were supported by the Korea Electrotechnology Research Institute (KERI) Primary Research Program through the National Research Council of Science and Technology (NST) funded by the Ministry of Science and ICT (MSIT) of the Republic of Korea: Grant No. 21A01003.
H.S. was supported by NRF of Korea (no. 2020R1I1A1A01069585).

\bibliographystyle{amsplain}
\bibliography{te_uniqueness_constant_seebeck}

\providecommand{\bysame}{\leavevmode\hbox to3em{\hrulefill}\thinspace}
\providecommand{\MR}{\relax\ifhmode\unskip\space\fi MR }
\providecommand{\MRhref}[2]{%
  \href{http://www.ams.org/mathscinet-getitem?mr=#1}{#2}
}
\providecommand{\href}[2]{#2}
\begin{thebibliography}{10}

\bibitem{abelson2006space}
Robert~D. Abelson, \emph{Space missions and applications}, Thermoelectrics
  Handbook: Macro to Nano (D.~M. Rowe, ed.), CRC press, 2006,
  pp.~56--1--56--29.

\bibitem{ashcroft1976solid}
Neil~W. Ashcroft and N.~David Mermin, \emph{Solid state physics}, Cengage
  Learning, 1976.

\bibitem{barbu2016}
Viorel Barbu, \emph{Differential equations}, Springer, 2016.

\bibitem{bell2008cooling}
Lon~E. Bell, \emph{Cooling, heating, generating power, and recovering waste
  heat with thermoelectric systems}, Science \textbf{321} (2008), no.~5895,
  1457--1461.

\bibitem{chung2014nonlocal}
Jaywan Chung and Byungki Ryu, \emph{Nonlocal problems arising in
  thermoelectrics}, Mathematical Problems in Engineering \textbf{2014} (2014),
  no.~Article ID 909078, 7 pages.

\bibitem{cimatti1989remark}
Giovanni Cimatti, \emph{Remark on existence and uniqueness for the thermistor
  problem under mixed boundary conditions}, Quarterly of Applied Mathematics
  \textbf{47} (1989), no.~1, 117--121.

\bibitem{cimatti1990stationary}
\bysame, \emph{The stationary thermistor problem with a current limiting
  device}, Proceedings of the Royal Society of Edinburgh Section A: Mathematics
  \textbf{116} (1990), no.~1-2, 79--84.

\bibitem{goupil2015continuum}
Christophe Goupil (ed.), \emph{Continuum theory and modeling of thermoelectric
  elements}, Wiley-VCH, 2016.

\bibitem{ioffe1957semiconductor}
A.~F. Ioffe, \emph{Semiconductor thermoelements and thermoelectric cooling},
  Infosearch, 1957.

\bibitem{lee1969influence}
Jerry~S. Lee, \emph{The influence of variable thermal conductivity and variable
  electrical resistivity on thermoelectric generator performance}, Energy
  Conversion \textbf{9} (1969), no.~3, 91--97.

\bibitem{mahan1991inhomogeneous}
G.~D. Mahan, \emph{Inhomogeneous thermoelectrics}, Journal of Applied Physics
  \textbf{70} (1991), no.~8, 4551--4554.

\bibitem{rudin76}
Walter Rudin, \emph{Principles of mathematical analysis}, 3rd ed., McGraw-Hill,
  1976.

\bibitem{sherman1960calculation}
B.~Sherman, R.~R. Heikes, and R.~W. Ure~Jr., \emph{Calculation of efficiency of
  thermoelectric devices}, Journal of Applied Physics \textbf{31} (1960),
  no.~1, 1--16.

\end{thebibliography}

\end{document}